\newtheorem{thm}{Theorem}[section]
\newtheorem{cor}[thm]{Corollary}
\newtheorem{lem}[thm]{Lemma}
\newtheorem{prop}[thm]{Proposition}
\theoremstyle{definition}
\newtheorem{defn}[thm]{Definition}
\theoremstyle{remark}
\newtheorem{rem}[thm]{Remark}
\newtheorem{example}[thm]{Example}
\numberwithin{equation}{section}
\newcommand{\To}{\longrightarrow}
\newcommand{\Sp}{\mbox{Sp}(2n)}
\newcommand{\SO}{\mbox{SO}(2n)}
\newcommand{\PSp}{\mbox{PSp}(2n)}
\newcommand{\PSO}{\mbox{PSO}(2n)}
\newcommand{\gm}{{\mathbb G}_m}
\newcommand{\bun}[1]{{\rm Bun}_{#1}}
\newcommand{\sym}{\text{Sym}}
\newcommand{\F}{{\mathcal F}}
\newcommand{\G}{{\mathcal G}}
\newcommand{\struct}{{\mathcal O}}
\newcommand{\orb}{\mathrm{orb}}
\newcommand{\spec}{{\rm Spec}}
\newcommand{\tildearrow}{\stackrel{\sim}{\longrightarrow}}
\newcommand{\ZZ}{{\mathbb Z}}
\newcommand{\sG}{{\mathfrak G}}
\begin{document}
\title[Period-Index Problem]{The Period-Index Problem of the Canonical Gerbe of 
Symplectic and Orthogonal Bundles}

\author{Indranil Biswas}
\address{Tata Institute for Fundamental Research, Homi Bhabha Road, Mumbai 400005, INDIA}
\email{indranil@math.tifr.res.in}

\author{Emre Coskun}
\address{Tata Institute for Fundamental Research, Homi Bhabha Road, Mumbai 400005, INDIA}
\email{ecoskun@math.tifr.res.in}

\author{Ajneet Dhillon}
\address{Department of Mathematics, Middlesex College, University of Western Ontario, London, ON N6A 5B7 CANADA}
\email{adhill3@uwo.ca}

\subjclass{14D20, 14D23, 14F22}

\keywords{Period-index problem, moduli stack, gerbe, orthogonal, symplectic}

\date{\today}

\begin{abstract}
We consider regularly stable parabolic symplectic and orthogonal bundles
over an irreducible smooth projective curve over an algebraically 
closed field of characteristic zero. The morphism from
the moduli stack of such bundles to its coarse moduli space is a $\mu_2$-gerbe.
We study the period and index of this gerbe, and solve the corresponding period-index problem.
\end{abstract}

\maketitle

\section{Introduction}
Let $X$ be a smooth, projective, irreducible curve over an 
algebraically closed field $k$ of characteristic zero. 
A symplectic (respectively, orthogonal) vector bundle with parabolic structure on 
$X$ consists of a parabolic vector bundle $(E_*,B)$ with a skew-symmetric
(respectively, symmetric) pairing
$$
E_* \otimes E_* \longrightarrow \struct_X,
$$
satisfying a nondegeneracy condition (see Section \ref{sec-moduli}). Note that the above tensor
product above is a tensor product in the category of parabolic vector bundles. 

A regularly stable symplectic (respectively, orthogonal) parabolic bundle is one 
whose automorphism group coincides with the center of the
symplectic (respectively, orthogonal) group. Let $\bun{G}^{\rm rs}$ be the
moduli stack of regularly stable symplectic or orthogonal parabolic bundles, and
let $M^{\rm rs}_G$ be the corresponding coarse moduli space.
We have a $\mu_2$-gerbe
$$
\bun{G}^{\rm rs} \,\longrightarrow\, M^{\rm rs}_G\, .
$$
The purpose of this paper is to study the period-index problem for this gerbe. 
The center of $\text{SO}(2n+1)$ is trivial, hence in this case
$\bun{G}^{\rm rs}\,=\, M^{\rm rs}_G$. Therefore, we will assume that
the rank in the orthogonal case is even.

The index is computed in Theorem \ref{t:theindex}. The main idea is to degenerate 
the gerbe to a highly singular point (see Proposition \ref{prop-degen})
and to use Luna's \'etale slice theorem to study the geometry of this stack over 
the moduli space here.

In Section \ref{sec-moduli}, we briefly recall the definition and properties of 
parabolic bundles on $X$, and recall the construction of their moduli spaces. In 
Proposition \ref{prop-p} we compute the period of the canonical gerbe in most 
cases. We also discuss an application of Luna's \'etale slice theorem to our 
case using Proposition \ref{prop-degen}. In Section \ref{sec-twisted}, after 
a brief overview of twisted sheaves, we give an upper bound for the 
index of the canonical gerbe.

In Sections \ref{sec-stable-triple} and \ref{sec-fieldvalued}, we discuss the 
concept of a stable central simple algebra with involution, and prove that for 
a stable central simple algebra with 
involution over a field $F$, there exists a morphism ${\rm Spec}\,F 
\longrightarrow Z^s/G^{ad}$, where 
$G$ denotes an appropriate symplectic or orthogonal group. Finally, in Section 
\ref{sec-construction}, we prove the existence of lower bounds for the index of 
the canonical gerbe. This gives the index completely in 
the symplectic case, and it gives a very strong lower bound for the orthogonal 
case. The appendices discuss a technical result used in the proof, and 
symplectic or orthogonal involutions on a central simple algebra.

\subsection{Conventions}\label{se-con}
\begin{itemize}
 \item We work over an algebraically closed base field $k$ of characteristic 
zero.

 \item By $G$ we denote $\Sp$ or $\SO$, and $G^{ad}$ denotes the adjoint form of 
$G$, 
i.e. $\PSp$ or $\PSO$. In the case of $G=\SO$, we assume that $n \geq 2$. 
Also, 
$G(j)$ denotes $\mbox{Sp}(j)$ or $\mbox{SO}(j)$. By $\mathfrak{g}$ we denote the Lie algebra of $G$, while $\mathfrak{g}(j)$ denotes the Lie algebra of $G(j)$.

 \item $X$ denotes an irreducible smooth projective curve of genus 
$g(X) \geq 2$.

 \item $D := \{x_1 ,\cdots ,x_n\} \subset X$ is the ordered set of 
parabolic 
points on $X$.

 \item $M^{\rm rs}_{G}$, $M^{\rm s}_{G}$ and $M^{\rm ss}_{G}$ denote 
the coarse moduli spaces of regularly stable, stable and semistable 
parabolic $G$-bundles, while $\text{Bun}^{\rm rs}_{G}$, $\text{Bun}^{\rm 
s}_{G}$ and $\text{Bun}^{\rm ss}_{G}$ are similarly defined stacks of parabolic $G$-bundles.

 \item $\mathcal{E}$ is the universal parabolic bundle on $X \times 
\text{Bun}^{\rm *}_{G}$ where ``$*$'' stands for $rs$, $s$ or $ss$.
 
 \item $p: X \times \text{Bun}^{\rm rs}_{G} \To \text{Bun}^{\rm rs}_{G}$ denotes 
the natural projection.
\end{itemize}

\section{The Moduli Space of Parabolic Bundles}\label{sec-moduli}

\subsection{Parabolic $G$-Bundles over a Curve}

Let $E_*$ be a parabolic vector bundle on $X$ with parabolic structure 
over $D$. A \emph{bilinear form} on $E_*$ is a homomorphism of parabolic 
bundles $B : E_*\otimes E_*  \longrightarrow {\mathcal O}_X$, where 
${\mathcal O}_X$ is the trivial line bundle with trivial parabolic 
structure (this means that there are no nonzero parabolic weights). The 
parabolic 
vector bundle $E_*\otimes E^\vee_*$, where $E^\vee_*$ is the parabolic dual,
is given by the sheaf of endomorphisms of 
$E_*$ compatible with the parabolic structure. We have a natural homomorphism of 
parabolic bundles $h : {\mathcal O}_X \longrightarrow E_*\otimes E^\vee_*$ (as 
before, ${\mathcal O}_X$ is the trivial line bundle with trivial parabolic 
structure) that sends any locally defined function $f$ to the locally defined 
endomorphism $s \longmapsto f\cdot s$ of $E_*$. Given a bilinear form $B$ on 
$E_*$, the composition
\begin{equation}\label{f1}
 E_*=E_*\otimes {\mathcal O}_X \stackrel{{\rm Id}\otimes h}{\longrightarrow} 
E_*\otimes E_*\otimes E^\vee_* \stackrel{B \otimes {\rm Id}}{\longrightarrow} 
E^\vee_*
\end{equation}
will be denoted by $\widehat{B}$.

A \emph{symplectic} parabolic vector bundle is a pair $(E_*, B)$, where $E_*$ is 
a parabolic vector bundle, and $B$ is a skew-symmetric bilinear form on $E_*$ 
such that the above homomorphism $\widehat{B}$ is an isomorphism. A symplectic 
parabolic vector bundle $(E_*, B)$ of rank $2n$ will be called a 
\textit{parabolic} $\Sp$-bundle. An \emph{orthogonal} parabolic vector bundle 
$(E_*, B)$ is defined similarly, with a symmetric bilinear form $B$ on $E_*$. An 
orthogonal parabolic vector bundle $(E_*, B)$ of rank $n$ will be called a 
\textit{parabolic} ${\rm SO}(n)$-bundle.

If $\widehat{B}$ is an isomorphism, then $\text{\rm par-deg}(E_*) = \text{\rm 
par-deg}(E^\vee_*) = - \text{\rm par-deg}(E_*)$. Hence symplectic and orthogonal 
parabolic bundles have parabolic degree zero.

An \textit{isomorphism} between two parabolic $G$-bundles $(E_*,B)$ and 
$(E'_*,B')$ is an isomorphism of parabolic vector bundles $\phi : E_* 
\longrightarrow E'_*$ such that the following diagram commutes:
\[
\begin{matrix}
 E_*\otimes E_* & \stackrel{\phi\otimes \phi}{\longrightarrow} & 
 E'_*\otimes E'_*\\
 ~\Big\downarrow B && ~\Big\downarrow B'\\
 {\mathcal O}_X & \stackrel{{\rm Id}}{\longrightarrow} &
 {\mathcal O}_X
\end{matrix}
\]

A parabolic $G$-bundle $(E_*,B)$ is called \textit{stable} (respectively, \textit{semistable}) if for any nonzero proper parabolic subbundle $F_* \subset E_*$ with $B(F_*\otimes F_*)=0$, the inequality
$$
\frac{\text{\rm par-deg}(F_*)}{{\rm rank}(F_*)} <
\frac{\text{\rm par-deg}(E_*)}{{\rm rank}(E_*)} ~{\rm 
(respectively,~} \frac{\text{\rm par-deg}(F_*)}{{\rm rank}(F_*)} 
\leq\, \frac{\text{\rm par-deg}(E_*)}{{\rm rank}(E_*)}{\rm )}
$$
holds.

A stable parabolic $G$-bundle $(E_*,B)$ is called \textit{regularly stable} if it has no automorphism other than $\pm \text{Id}_{E_*}$.

\begin{rem}\label{r:structure}
Let $(E_*, B)$ be a parabolic ${\rm SO}(a)$-bundle. Then for each positive 
integer $b$, the parabolic vector bundle $(E_*, B)^{\oplus b}$ has a natural 
structure of a parabolic ${\rm SO}(ab)$-bundle; and if $b$ is even, then $(E_*, 
B)^{\oplus b}$ also has the structure of an ${\rm Sp}(ab)$-bundle. Indeed, 
$(E_*, B)^{\oplus b}\, =\, (E_*, B)\otimes k^{\oplus b}$. Now put the standard 
symplectic or orthogonal structure on $k^{\oplus b}$. The orthogonal structure 
$B$ on $E_*$ and the symplectic or orthogonal structure on $k^{\oplus b}$ 
together define a symplectic or orthogonal structure on the tensor product $(E_*, 
B)\otimes k^{\oplus b}$.
\end{rem}

\subsection{A Construction of the Moduli Space}
\label{ss:moduli}

Fix a parabolic type, meaning parabolic weights and quasi-parabolic filtration 
types. We assume that the parabolic type is so chosen that
there is a parabolic $G$-bundle of the given type. This requires the
parabolic type to be compatible with the $G$-structure of the bundles.
Let $M^{\rm ss}_{G}$ be the moduli space of semistable parabolic $G$-bundles 
of the given parabolic type.

We now briefly describe the construction of this moduli space $M^{\rm ss}_{G}$ as 
done in \cite{BBN01}. Given a parabolic type, there is an associated 
Galois cover $Y\longrightarrow X$ with finite Galois group $\Gamma$. Now 
\cite[Theorem 4.3]{BBN01} 
gives an equivalence between parabolic $G$-bundles on $X$ and $(\Gamma,G)$-bundles on $Y$. 
Using this equivalence, the moduli space of semistable parabolic $G$-bundles on $X$ is 
constructed in the following manner. Choose an integer $m_0$ such that for $m \geq m_0$
 and any semistable $\Gamma$-bundle $\mathcal{E}$ on $Y$ of rank $2n$ and trivial determinant
 one has $h^i(\mathcal{E}(m))=0$ for $i > 0$ and $\mathcal{E}(m)$ is globally generated. 
Let $N=h^0(\mathcal{E}(m))$. Let $P(t)=2n(t+1-g(X))$ be the Hilbert polynomial of $\mathcal{E}$.
 Then there is a well-known Quot scheme $Q$, constructed by Grothendieck, which parametrizes 
quotients of $\mathcal{O}_{Y}(-m)^N$ with Hilbert polynomial $P$. The finite group $\Gamma$ 
acts on $Q$. Let $\G$ denote the group of $\Gamma$-invariant automorphisms of 
$\mathcal{O}_{Y}(-m)^N$. This is a reductive group by \cite{csS69}. 
Let $Q^\Gamma$ denote the $\Gamma$-invariant locus in $Q$.
There exists a nonempty open subset $\mathcal{R}^{ss}$ of $Q^\Gamma$ consisting 
of semistable bundles. One then constructs a scheme $Q_G$ with a $\G$-action 
together with a $\G$-equivariant morphism $Q_G\longrightarrow \mathcal{R}^{ss}$. 
Then the 
GIT quotient of $Q_G$ by $\G$ is the moduli space $M^{\rm ss}_{G}$. We note that 
there exist open subsets $\mathcal{R}^{rs} \subset \mathcal{R}^{s} \subset 
\mathcal{R}^{ss}$ consisting of regularly stable and stable parabolic 
$G$-bundles; and their GIT quotients under $\mathcal{G}$, $M^{\rm rs}_{G} 
\subset M^{\rm s}_{G} \subset M^{\rm ss}_{G}$ are the corresponding coarse 
moduli spaces. (We refer the reader to \cite[Section 5]{BBN01} for the 
details of this construction.)

\subsection{The period of the canonical gerbe}

Let $d$ be the degree of the vector bundle underlying a parabolic $G$-bundle in $M^{\rm rs}_{G}$. Then the total parabolic weight is $-d$ since the parabolic degree is zero. At $x_i \in D$, let $\{n_{1,i}, \cdots, n_{\ell_i,i}\}$ be the multiplicities of the parabolic weights at $x_i$. (Recall that these are the dimensions of the graded pieces of the quasi-parabolic filtration at $x_i$.)

Define
\begin{equation}\label{dede}
 \epsilon := \text{g.c.d.}\{d, 2n, \{n_{1,i}, \cdots, 
n_{\ell_i,i}\}_{i=1}^n\}.
\end{equation}

\begin{prop}\label{prop-p}
If $\epsilon$ is odd, there is a Poincar\'e vector bundle on $X \times M^{\rm rs}_{G}$. If $\epsilon$ is even, there is no Poincar\'e vector bundle on $X \times M^{\rm rs}_{{\rm Sp}(2n)}$. If $\epsilon \geq 4$ is even, there is no Poincar\'e vector bundle on $X \times M^{\rm rs}_{{\rm SO}(2n)}$. If $\epsilon = 2$, there is no Poincar\'e vector bundle on $X \times M^{\rm rs}_{{\rm SO}(4n)}$.
\end{prop}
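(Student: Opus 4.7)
The plan is to reduce the existence of a Poincaré bundle to the computation of the $\mu_2$-character
\[
\chi\colon \mathrm{Pic}(\bung)\longrightarrow \Hom(\mu_2,\gm)\cong \ZZ/2,
\]
which records the residual action of the stabilizer $\mu_2=Z(G)$ on a line bundle at a stacky point. Since $-\mathrm{Id}\in Z(G)$ acts by $-1$ on the standard representation, it acts on the universal bundle $\E$ by the sign character. A Poincaré bundle on $X\times \mg$ exists iff $\chi$ is surjective: given a line bundle $L$ on $\bung$ with $\chi(L)=-1$, the twist $\E\otimes p^*L^{-1}$ has trivial $\mu_2$-action on every stacky fibre and descends to $X\times \mg$; conversely, any Poincaré bundle produces such an $L$.

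To compute $\chi$ on tautological classes I would use that $-\mathrm{Id}$ acts on any functorial representation of dimension $N$ by $(-1)^N$, giving
\[
\chi(\det\E|_x)=(-1)^{2n}=1,\quad \chi(\det\mathrm{gr}^j_{x_i}\E)=(-1)^{n_{j,i}},\quad \chi(\det Rp_*\E)=(-1)^{d+2n(1-g)}=(-1)^d.
\]
If $\epsilon$ is odd, some $n_{j,i}$ or $d$ is odd, so a product of these tautological line bundles supplies the desired $L$ with $\chi(L)=-1$, yielding the Poincaré bundle. For $G=\mathrm{Sp}(2n)$ with $\epsilon$ even every tautological class has $\chi=1$; the key step is then a Laszlo--Sorger/Kumar--Narasimhan type description of $\mathrm{Pic}(\bung)$, adapted to the parabolic setting via the Galois cover $Y\to X$ from Section~\ref{sec-moduli}, showing that these tautological classes together with pullbacks from $\mg$ (which lie in $\ker\chi$) generate the whole Picard group. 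This forces $\mathrm{Image}(\chi)=0$ and rules out a Poincaré bundle.

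For $G=\mathrm{SO}(2n)$ an extra generator appears: the Pfaffian line bundle $\mathcal{P}f$ with $\mathcal{P}f^{\otimes 2}\cong \det Rp_*\E$. It exists as a genuine line bundle on $\bung$ precisely when $d$ is even, which is automatic when $\epsilon$ is even. A parity calculation, coming from the action of the preimage of $-\mathrm{Id}$ in $\mathrm{Spin}(2n)$ on the half-spin representations and the Riemann--Roch contribution from $p$, shows that $\chi(\mathcal{P}f)$ is controlled by the parities of $n$ and $d/2$, and can equal $-1$ only when $\epsilon=2$ and $2n\equiv 2\pmod 4$; this is exactly the case excluded from the statement. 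In the remaining ranges ($\epsilon\geq 4$, so $4\mid d$ and $4\mid 2n$; or $\epsilon=2$ with rank $4n$, so $n$ is even in $\mathrm{SO}(2n)$), one computes $\chi(\mathcal{P}f)=1$, leaving $\mathrm{Image}(\chi)=0$ and no Poincaré bundle. The main obstacle is justifying the Picard-group claim in both cases---showing that the tautological and Pfaffian classes, together with pullbacks from $\mg$, exhaust $\mathrm{Pic}(\bung)$---which relies on the $\Gamma$-equivariant Picard-group computation on $Y$ and on $\bung$ having complement of codimension $\geq 2$ in $\bun{G}^{ss}$ when $g(X)\geq 2$.
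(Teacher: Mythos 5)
Your first half (the case $\epsilon$ odd) is essentially the paper's argument: build a line bundle of odd $\mu_2$-weight out of the determinants of the graded pieces at the parabolic points and the determinant of cohomology, twist $\E$ by it, and descend. The reformulation ``Poincar\'e bundle exists iff there is a weight-one line bundle on ${\rm Bun}^{\rm rs}_G$'' is also fine. The problem is the non-existence half. There your entire argument rests on the claim that the tautological classes (plus the Pfaffian in the orthogonal case, plus pullbacks from $M^{\rm rs}_G$) generate ${\rm Pic}({\rm Bun}^{\rm rs}_G)$, and you yourself flag this as ``the main obstacle''. That claim is not a routine adaptation: the Laszlo--Sorger/Kumar--Narasimhan computations are for the full stack of $G$-bundles on a curve without parabolic structure, and extending them to parabolic $\mathrm{Sp}$- and $\mathrm{SO}$-bundles, restricting to the regularly stable open substack (which needs a codimension estimate you only assert), and then computing the $\mu_2$-weight of the Pfaffian class are each substantial unproved steps. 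As written, the crux of the proposition --- why no weight-one line bundle can exist when $\epsilon$ is even --- is deferred to statements you do not establish, so the proof has a genuine gap.

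The paper avoids Picard groups altogether. Since $\epsilon$ is even, it divides $d$, $2n$ and all multiplicities $n_{j,i}$, so one can produce a \emph{semistable} parabolic $G$-bundle of the given type of the degenerate form $(V_*,B_0)\otimes k^{\oplus 2}$ (or $k^{\oplus\epsilon}$), where $(V_*,B_0)$ is a stable parabolic orthogonal (or, for ${\rm SO}(4n)$ with $\epsilon=2$, symplectic) bundle and $k^{\oplus 2}$, $k^{\oplus\epsilon}$ carry the standard symplectic or orthogonal form. Its automorphism group contains ${\rm Sp}(2)$ or ${\rm SO}(\epsilon)$ with $\epsilon\geq 4$, a group with no nontrivial characters containing $-{\rm Id}$, and then the criterion of Biswas--Hoffmann \cite[Theorem 2.2]{BH} immediately forbids a Poincar\'e bundle on $X\times M^{\rm rs}_G$: the obstruction to a Poincar\'e family over the regularly stable locus is detected by a single semistable bundle with too large an automorphism group. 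This also explains the case split in the statement (${\rm SO}(2)\cong\gm$ has characters, so $\epsilon=2$ must be handled via an ${\rm Sp}(2)$ inside ${\rm SO}(4n)$), something your Pfaffian-parity heuristic gestures at but does not prove. If you want to salvage your route, you would have to actually prove the parabolic Picard-group generation and the Pfaffian weight formula; the degeneration argument is far shorter.
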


\begin{proof}
Recall that $\text{Bun}^{\rm rs}_{G}$ is the moduli stack of regularly stable parabolic $G$-bundles, and ${\mathcal E} \longrightarrow X \times \text{Bun}^{\rm rs}_{G}$ is the universal parabolic bundle. Let
\begin{equation}\label{f}
 f : \text{Bun}^{\rm rs}_{G} \longrightarrow M^{\rm rs}_{G}
\end{equation}
be the morphism to the coarse moduli space.

For any integer $m$, its image in ${\mathbb Z}/2\mathbb Z$ will be 
denoted by $\overline{m}$. Note that for each integer $n_{\ell,i}$ in 
\eqref{dede}, there is a line bundle of weight $\overline{n_{\ell,i}}$ 
on $\text{Bun}^{\rm rs}_{G}$. Similarly, there is a line bundle of 
weight $\overline{d+2n(1-g(X))}\,=\, \overline{d}$ on $\text{Bun}^{\rm 
rs}_{G}$ given by the determinant of cohomology of ${\mathcal E}$.

First assume that $\epsilon$ is odd. Then there is a line bundle $L$ of weight 
$\overline{1}$ on $\text{Bun}^{\rm rs}_{G}$. Hence the vector bundle ${\mathcal 
E}\otimes p^*(L)$ on $X \times \text{Bun}^{\rm rs}_{G}$ has weight zero; 
therefore, it descends to $X \times M^{\rm rs}_{G}$ as a Poincar\'e vector bundle.

Now assume that $\epsilon$ is even; if $G=\SO$, then assume that
$\epsilon \geq 4$. We will show that there is no Poincar\'e vector bundle on $X 
\times M^{\rm rs}_{G}$.

Let us first consider the $\Sp$ case. If $(V_*, B_0)$ is a parabolic $\text{SO}(n)$-bundle, then the parabolic direct sum $V_*\oplus V_* = V_*\otimes k^{\oplus 2}$ has a symplectic structure. Indeed, the symmetric bilinear form $B_0$ on $V_*$ and the standard symplectic form on $k^{\oplus 2}$ together define a parabolic $\Sp$-structure $\widetilde{B}_0$ on $V_*\otimes k^{\oplus 2}$.
Since $\epsilon$ is a multiple of $2$, there is stable parabolic 
$\text{SO}(n)$-bundle $(V_*, B_0)$ such that the corresponding parabolic 
$\Sp$-bundle $(V_*\otimes k^{\oplus 2}, \widetilde{B}_0)$ is a
parabolic $\Sp$-bundle of the given type; this parabolic $\Sp$-bundle is 
semistable because $(V_*, B_0)$ is semistable.

Now consider the $\SO$ case. We can then go through the same 
construction as follows. We choose a stable parabolic 
$\text{SO}(2n/\epsilon)$-bundle 
$(V_*, B_0)$ such that the corresponding parabolic $\SO$-bundle $(V_*\otimes 
k^{\oplus \epsilon}, \widetilde{B}_0)$ is a semistable parabolic $\SO$-bundle of 
the given type; here $k^{\oplus \epsilon}$ is equipped with the standard 
orthogonal form.

Finally, consider ${\rm SO}(4n)$ with $\epsilon =2$. In this case there is a 
stable parabolic $\Sp$-bundle $(V_*, B_0)$ such that $(V_*\otimes k^{\oplus 2},  
\widetilde{B}_0)$ is a semistable parabolic ${\rm SO}(4n)$-bundle, where 
$k^{\oplus 2}$ is equipped with the standard symplectic form.

The automorphism group of $(V_*\otimes k^{\oplus 2}, \widetilde{B}_0)$ in the 
$\Sp$ case, or $(V_*\otimes k^{\oplus\epsilon}, \widetilde{B}_0)$ and 
$(V_*\otimes k^{\oplus 2}, \widetilde{B}_0)$ in the two $\SO$ cases, contains 
$\text{Sp}(2)$ or $\text{SO}(\epsilon)$, and hence the center $\pm\text{Id}$ of 
this $\text{Sp}(2)$ or $\text{SO}(\epsilon)$ is contained in the automorphism 
group. Since $\text{Sp}(2)$ or $\text{SO}(\epsilon)$ does not have any nontrivial 
character, by \cite[p. 1286, Theorem 2.2]{BH} we conclude that there is no 
Poincar\'e vector bundle on $X \times M^{\rm rs}_{G}$.
\end{proof}

\begin{rem}\label{rem-period2}
If $\epsilon$ is odd, Proposition \ref{prop-p} shows that the canonical 
$\mu_2$-gerbe $\text{Bun}^{\rm rs}_{G}\longrightarrow M^{\rm rs}_{G}$ is 
neutral. 
Hence its period and index are 1. Unless otherwise stated, from now on 
we assume that $\epsilon$ is even and that $\epsilon \geq 4$ in the 
$\SO$ case (note that this condition is automatically satisfied if 
there are no parabolic points).
\end{rem}

\begin{prop}\label{prop-degen}
Let $r = 2n/\epsilon$. There is a regularly stable parabolic ${\rm
SO}(r)$-bundle 
$(E_*, B)$ such that $(E_*, B)^{\oplus \epsilon} \in M^{\rm ss}_{{\rm SO}(2n)}$.
Similarly, there is a regularly stable parabolic ${\rm SO}(r)$-bundle $(E_*, B)$
such that $(E_*, B)^{\oplus \epsilon} \in M^{\rm ss}_{{\rm Sp}(2n)}$.
\end{prop}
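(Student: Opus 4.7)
The strategy is to sharpen the construction used in the proof of Proposition \ref{prop-p}, upgrading the auxiliary parabolic bundle from \emph{stable} to \emph{regularly stable}. Set $r = 2n/\epsilon$. Since $\epsilon$ divides $2n$, $d$, and each multiplicity $n_{\ell,i}$, one can prescribe a parabolic type for parabolic ${\rm SO}(r)$-bundles of underlying degree $d/\epsilon$ with parabolic weight multiplicities $n_{\ell,i}/\epsilon$ at each $x_i \in D$, keeping the same weights as in the target type. By Remark \ref{r:structure}, the $\epsilon$-fold direct sum of any such bundle, viewed as a tensor product with $k^{\oplus \epsilon}$ carrying the standard orthogonal form in the ${\rm SO}(2n)$ case, or the standard symplectic form in the ${\rm Sp}(2n)$ case (admissible since $\epsilon$ is even by Remark \ref{rem-period2}), acquires the required $G$-structure and has exactly the prescribed parabolic type.

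Next, I would establish nonemptiness of the regularly stable locus for this auxiliary parabolic ${\rm SO}(r)$ type. The construction recalled in Section \ref{ss:moduli} (following \cite{BBN01}) produces the moduli space of semistable parabolic ${\rm SO}(r)$-bundles of the chosen type; nonemptiness follows from the fact that this type was extracted from a realizable parabolic $G$-type. Under the standing hypothesis $g(X)\geq 2$, the stable locus is nonempty and has positive dimension. Regular stability cuts out an open subscheme whose complement parametrizes stable bundles with extra automorphisms, a proper closed substack. Hence a regularly stable $(E_*, B)$ of the desired type exists.

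Finally, I would verify that $(E_*, B)^{\oplus \epsilon}$ is semistable as a parabolic $G$-bundle. Any isotropic parabolic subbundle $F_* \subset E_*^{\oplus \epsilon}$ projects onto parabolic subsheaves in each summand; stability of $(E_*, B)$ as a parabolic ${\rm SO}(r)$-bundle, together with the standard direct-sum preservation of semistability at common slope, forces the required slope inequality $\mathrm{par\text{-}deg}(F_*)/\mathrm{rank}(F_*) \leq 0$. The $G$-structure on the direct sum is exactly the one furnished by Remark \ref{r:structure}, matching the target parabolic type, so the resulting parabolic $G$-bundle indeed lies in $M^{\rm ss}_G$.

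The main obstacle is the second step: ensuring that the regularly stable locus of parabolic ${\rm SO}(r)$-bundles of the specified type is nonempty, particularly in low-rank edge cases (for instance $r = 1$ or $r = 2$, where ${\rm SO}(r)$ is trivial or a torus and the usual automorphism-group arguments must be adapted). In such cases one exploits the freedom still remaining in the choice of parabolic weights on the auxiliary bundle to guarantee that the stable moduli is positive-dimensional, so that the generic regularly stable object can be produced.
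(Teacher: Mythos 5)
Your overall route coincides with the paper's: divide the numerical data by $\epsilon$ (same weights, degree $d/\epsilon$, multiplicities $n_{\ell,i}/\epsilon$), note that the existence criteria for a parabolic orthogonal type are inherited from the target type, produce a regularly stable parabolic ${\rm SO}(r)$-bundle of that type by genericity/dimension arguments, and then use Remark \ref{r:structure} to put the ${\rm SO}(2n)$- or ${\rm Sp}(2n)$-structure on the $\epsilon$-fold sum. The paper's proof is exactly this skeleton, with the genericity step phrased via the Harder--Narasimhan stratification (the strata of nontrivial HN type, and likewise the non-regularly-stable locus, have dimension strictly smaller than that of the moduli space).

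Two points in your write-up would not survive scrutiny as stated. First, the semistability verification: stability of $(E_*,B)$ as a parabolic ${\rm SO}(r)$-bundle only constrains \emph{isotropic} parabolic subbundles of $E_*$, whereas the projections to the summands of an isotropic $F_*\subset E_*^{\oplus\epsilon}$ need not be isotropic, so your projection argument yields no slope bound. What is needed is that the parabolic vector bundle underlying a stable parabolic orthogonal bundle is semistable (indeed polystable) of parabolic degree zero; then every subsheaf of $E_*^{\oplus\epsilon}$, in particular every isotropic parabolic subbundle, has nonpositive parabolic slope. (The paper asserts the semistability of such sums without proof, e.g.\ in the proof of Proposition \ref{prop-p}.) Second, your proposed remedy for the low-rank edge cases is not available: the auxiliary type is completely forced by the target type --- the weights must be the very same weights, since tensoring with $k^{\oplus\epsilon}$ carrying the trivial parabolic structure leaves weights unchanged and only multiplies multiplicities and degree by $\epsilon$ --- so there is no ``freedom in the choice of parabolic weights'' left to exploit. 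If the regularly stable locus for the forced ${\rm SO}(r)$-type were empty in some degenerate case, a genuinely different argument would be required; the paper, like you, relies on the dimension count and does not treat such cases separately.
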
 

\begin{proof}
Fix rank, degree, quasi-parabolic types and parabolic weights. The necessary and sufficient condition for the existence of a parabolic orthogonal bundle with this data is the following:
\begin{enumerate}
 \item The parabolic degree is zero,
 \item At each parabolic point, if $\pi$ is a parabolic weight, then $1-\pi$ is also a parabolic weight, and
 \item At each parabolic point, the multiplicity of any parabolic weight $\pi$ coincides with the multiplicity of the parabolic weight $1-\pi$.
\end{enumerate}

The necessary and sufficient condition for the existence of a parabolic symplectic bundle with this data is the following:
\begin{enumerate}
 \item The rank is even,
 \item The parabolic degree is zero,
 \item At each parabolic point, if $\pi$ is a parabolic weight, then $1-\pi$ is also a parabolic weight, and
 \item At each parabolic point, the multiplicity of any parabolic weight $\pi$ coincides with the multiplicity of the parabolic weight $1-\pi$.
\end{enumerate}

If there is a parabolic orthogonal (respectively, symplectic) bundle, 
then there is a semistable parabolic orthogonal (respectively, 
symplectic) bundle. This follows from the fact that the stratum of 
parabolic orthogonal or symplectic bundles with given Harder-Narasimhan 
filtration type has dimension less than the dimension of the moduli 
space. Again for dimension reasons, there is a regularly stable bundle
$(E_*\, ,B)$ as in the statement of the proposition.
\end{proof}

\subsection{Luna's \'Etale Slice Theorem}

We follow the exposition in \cite{jmD04}. 

\begin{defn}
Let $H$ be a reductive linear algebraic group. An $H$-equivariant morphism $S 
\longrightarrow T$ of varieties is said to be \emph{strongly \'{e}tale} if 
$S/\!\!/H \longrightarrow T/\!\!/H$ is \'{e}tale.
\end{defn}

\begin{thm}\label{thm-les}
Suppose that $H$ acts on a smooth quasi-projective variety $S$ and the action 
is linearized with respect to some very ample line bundle. Let $s\in S$ be a 
closed point with stabilizer $H_s$ and closed orbit. Then there is an 
$H$-stable open subset $U \subseteq S$, containing $s$, and $V\subseteq U$ a 
$H$-stable smooth subvariety, such that if $N_s$ is the normal space to the 
orbit $H\cdot s$ at $s$,
then we have an equivariant diagram of strongly \'etale morphisms
\[
\xymatrix{
    &  H \times_{H_s} V \ar[dr] \ar[dl] & \\
 U  &                  & W\subseteq H \times_{H_s} N_s \\
}
\]
\end{thm}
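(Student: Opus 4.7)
My plan is to follow the classical argument due to Luna, whose two pillars are Matsushima's theorem (reductivity of stabilizers of closed orbits) and Luna's own fundamental lemma (passage from \'etale to strongly \'etale). Since the orbit $H\cdot s$ is closed and $H$ is reductive, Matsushima's theorem guarantees that $H_s$ is reductive. Using the linearization with respect to the very ample line bundle, I embed $S$ equivariantly as a locally closed subvariety of a projective space and, after passing to a suitable affine $H$-invariant neighborhood of $s$, reduce to the case in which $S$ is affine and sits inside an affine $H$-representation.

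The heart of the construction is an $H_s$-equivariant morphism $\psi \colon N_s \to S$ with $\psi(0)=s$ that is \'etale at $0$. Reductivity of $H_s$ gives an $H_s$-equivariant splitting
\[
T_sS \,\cong\, T_s(H\cdot s)\,\oplus\, N_s,
\]
so $N_s$ embeds $H_s$-equivariantly into the ambient representation. The morphism $\psi$ is then obtained by lifting this linear inclusion to a map of varieties into $S$, using an $H_s$-equivariant algebraic retraction onto $S$ of a neighborhood of $s$ in the ambient representation; such a retraction exists because the Reynolds operator of the reductive group $H_s$ can be applied to any (non-equivariant) algebraic retraction. By construction, the derivative of $\psi$ at $0$ is the splitting inclusion $N_s \hookrightarrow T_sS$, so $\psi$ is \'etale at $0$.

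Let $V_0\subseteq N_s$ be a small enough $H_s$-stable open neighborhood of $0$ on which $\psi$ is \'etale, and take $V := \psi(V_0)\subseteq S$, which is an $H_s$-stable open subvariety containing $s$. Consider the two $H$-equivariant morphisms
\[
\alpha \colon H\times_{H_s} V \longrightarrow S, \qquad (h,v)\longmapsto h\cdot v,
\]
\[
\beta \colon H\times_{H_s} V \longrightarrow H\times_{H_s} N_s,
\]
induced respectively by the $H$-action on $S$ and by $\psi^{-1}\colon V \to V_0$. A direct tangent-space computation at $(e,s)$, using the decomposition of $T_sS$, shows that both $\alpha$ and $\beta$ are \'etale there, send $(e,s)$ to a point with closed $H$-orbit, and induce identifications of stabilizers. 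The main obstacle, and the true technical content of the theorem, is now to promote these \'etale morphisms to strongly \'etale ones; this is Luna's fundamental lemma, which states that an $H$-equivariant morphism of affine $H$-varieties with $H$ reductive that is \'etale at a point with closed orbit and preserves the stabilizer is strongly \'etale on an $H$-saturated open neighborhood. Taking $U$ and $W$ to be the saturated open neighborhoods of $s$ and $[e,0]$ produced by the fundamental lemma for $\alpha$ and $\beta$ respectively yields the required diagram of strongly \'etale morphisms.
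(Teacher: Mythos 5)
The paper does not actually prove this statement: it is Luna's \'etale slice theorem and the paper simply cites Dr\'ezet's exposition, so the relevant comparison is with the classical argument you are reconstructing. Your overall architecture is the right one (Matsushima's theorem to get $H_s$ reductive, reduction via the linearization to an affine $H$-variety embedded in a representation, and Luna's fundamental lemma to upgrade \'etale to strongly \'etale on saturated neighborhoods). However, the middle step --- the construction of the slice --- has a genuine gap, in fact two related ones. First, your map $\psi\colon N_s\to S$ cannot be \'etale at $0$: its differential is the inclusion $N_s\hookrightarrow T_sS$, which is an isomorphism only when the orbit $H\cdot s$ is finite, since $\dim N_s=\dim S-\dim H\cdot s$. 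Consequently ``$V_0$ on which $\psi$ is \'etale'' and $V=\psi(V_0)$ open in $S$ do not make sense; $V$ must be a locally closed $H_s$-stable subvariety of dimension $\dim N_s$, and the \'etale maps in the theorem are $H\times_{H_s}V\to S$ and $H\times_{H_s}V\to H\times_{H_s}N_s$, not a map $N_s\to S$. Second, and more seriously, the existence of $\psi$ is made to rest on ``an $H_s$-equivariant algebraic retraction onto $S$ of a neighborhood of $s$ in the ambient representation.'' No such retraction exists in general: there is no tubular neighborhood theorem in the Zariski topology. For instance, a smooth affine plane cubic is not a retract of any Zariski-open subset of the plane, because a retraction would restrict to a nonconstant morphism from a punctured line to a curve of positive genus, which is impossible. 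The Reynolds operator can average an already existing retraction into an equivariant one, but it cannot create the retraction whose existence is the real issue.

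The standard repair reverses the direction of the auxiliary map. Either set $V:=S\cap(s+N')$, where $N'$ is an $H_s$-stable linear complement of $T_s(H\cdot s)$ in the ambient representation (so $T_sV=N_s$), and use the linear projection onto $N_s$ to get an $H_s$-morphism $V\to N_s$ \'etale at $s$; or, equivalently, use the Reynolds operator of $H_s$ to split the surjection $\mathfrak m_s\to\mathfrak m_s/\mathfrak m_s^2$ equivariantly, which yields an $H_s$-equivariant morphism from an affine $H_s$-stable neighborhood of $s$ in $S$ to $T_sS$ \'etale at $s$, and compose with the projection to $N_s$. With the slice constructed this way, your two maps $\alpha\colon H\times_{H_s}V\to S$ and $\beta\colon H\times_{H_s}V\to H\times_{H_s}N_s$ are \'etale at the base point, and your final appeal to the fundamental lemma (applied twice, after saturating) is exactly how the cited proof concludes.
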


\begin{proof}
See \cite[page 27]{jmD04}.
\end{proof}

\begin{example}\label{ex-slice}
We apply Theorem \ref{thm-les} to $Q_G$. For $s$, we take a point corresponding 
to $(E_*, B)^{\oplus \epsilon}\,=\, (E_*, B)\otimes k^{\oplus \epsilon}$ as in 
Proposition \ref{prop-degen}. 
Recalling notation from Section \ref{se-con}, we have 
$H_s=G(\epsilon)$.
Take $V\subseteq Q_G$ as in the theorem. 

Recall the discussion at the start of Section \ref{ss:moduli}.
There is a diagram
\begin{equation}
\label{e:diagram}
 \xymatrix{
U \ar[d] & \G\times_{H_s} V \ar[l] \ar[r] \ar[d] & W \subseteq \G\times_{H_s} N_s \ar[d] \\
[U/\G]   \ar[d] &  [V/H_s] \ar[r] \ar[l] \ar[d]   & [W/\G] \subseteq [N_s/H_s] \ar[d] \\
U/\!\!/\G &   V/\!\!/H_s \ar[r]_{\pi_U} \ar[l]^{\pi_W} & W/\!\!/\G \subseteq 
N_s/\!\!/H_s
}
\end{equation}
Generically, $U\longrightarrow U/\!\!/\G$ is a $\G/\mu_2$-bundle. As $\mu_2$ 
acts 
trivially on $V$, a
similar statement is true for the middle composite. Generically 
$W\longrightarrow W/\!\!/\G$ is also a
$\G/\mu_2$-bundle; see Proposition \ref{p:normal} below. It follows that the 
middle row consists, generically, of $\mu_2$ gerbes over
the bottom row.
\end{example}

\begin{prop}
$\mathcal{R}^{ss}$ is smooth.
\end{prop}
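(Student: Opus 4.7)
The plan is to reduce the smoothness of $\mathcal{R}^{ss}$ to two standard facts: smoothness of the Quot scheme $Q$ at semistable quotients for the chosen $m \geq m_0$, and the fact that in characteristic zero, the fixed locus of a finite group action on a smooth scheme is smooth.

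First I would fix a point $s \in \mathcal{R}^{ss}$, corresponding to a quotient $q : \struct_Y(-m)^N \twoheadrightarrow \mathcal{F}$ with $\mathcal{F}$ a semistable $\Gamma$-sheaf (necessarily locally free, since $Y$ is a smooth curve and the kernel $\K := \ker(q)$ is then also locally free). By Grothendieck's deformation theory, the Zariski tangent space to $Q$ at $[q]$ is $\Hom_Y(\K,\F)$ and the obstructions lie in $\Ext^1_Y(\K,\F)$. Applying $\Hom_Y(-,\F)$ to
\begin{equation*}
 0 \To \K \To \struct_Y(-m)^N \To \F \To 0
\end{equation*}
and using $\Ext^2_Y(\F,\F)=0$ on the curve, one gets a surjection $\Ext^1_Y(\struct_Y(-m)^N,\F) \twoheadrightarrow \Ext^1_Y(\K,\F)$, i.e. a surjection from $H^1(Y,\F(m))^N$. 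The defining property of $m_0$ makes $h^1(\F(m))=0$, so the obstruction vanishes and $Q$ is smooth at $[q]$. Hence the entire locus of $Q$ lying over semistable sheaves is smooth.

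Next I would handle the passage to $\Gamma$-invariants. Since we are in characteristic zero and $\Gamma$ is finite, a standard application of the averaging (Reynolds) operator, or equivalently Cartan's lemma on linearization of actions at a fixed point, shows that the fixed-point subscheme of $\Gamma$ on any smooth $\Gamma$-scheme is smooth: at any $\Gamma$-fixed point, the completed local ring is a quotient by the ideal generated by $\{g\cdot f - f : g \in \Gamma,\, f \in \mathfrak{m}\}$, and the complete intersection property follows from the fact that the invariant part of the tangent space has a $\Gamma$-equivariant complement. Applying this to the open smooth $\Gamma$-stable subscheme of $Q$ constructed in the previous paragraph, one concludes that $Q^\Gamma$ is smooth at every point lying over a semistable sheaf. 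Since $\mathcal{R}^{ss}$ is open in $Q^\Gamma$, it is smooth.

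The main conceptual step is the first one: making sure that the parabolic/$\Gamma$-equivariant nature of the sheaves does not interfere with the vanishing of the obstruction in $\Ext^1_Y(\K,\F)$. This is handled by taking $m \geq m_0$ uniformly, as guaranteed in the construction recalled in Section~\ref{ss:moduli}, so that $h^1(\F(m))$ vanishes for \emph{every} semistable $\Gamma$-sheaf of the given Hilbert polynomial. Everything else is then a formal consequence of standard deformation theory and the tameness of a finite group action in characteristic zero.
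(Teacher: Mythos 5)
Your argument is correct, and it is essentially the standard one. The paper itself offers no argument at this point: its ``proof'' is a citation to Remark~5.6 of \cite{BBN01}, and what you have written out is precisely the deformation-theoretic content behind that remark. Your two steps are both sound: at a point $[q]$ of $\mathcal{R}^{ss}$ the quotient $\F$ is a semistable bundle, so by the choice of $m\geq m_0$ one has $h^1(\F(m))=0$, and the long exact sequence obtained from applying $\Hom_Y(-,\F)$ to $0\to\K\to\struct_Y(-m)^N\to\F\to 0$, together with $\Ext^2_Y(\F,\F)=0$ on a curve, kills the obstruction space $\Ext^1_Y(\K,\F)$, so $Q$ is smooth along the (open, $\Gamma$-stable) locus of semistable locally free quotients; then, since $\Gamma$ is finite and we are in characteristic zero, the scheme-theoretic fixed locus of $\Gamma$ on a smooth scheme is smooth at every fixed point (linear reductivity/Cartan linearization), so $Q^\Gamma$ is smooth at all points of $\mathcal{R}^{ss}$, which is open in $Q^\Gamma$. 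The only point worth stating explicitly is the one you implicitly use: that ``the $\Gamma$-invariant locus $Q^\Gamma$'' is the fixed-point subscheme of the algebraic $\Gamma$-action on $Q$ induced by the action on $Y$ and the chosen $\Gamma$-linearization of $\struct_Y(-m)^N$, so that the fixed-locus smoothness theorem applies, and that its tangent space at $[q]$ is the invariant part $\Hom_Y(\K,\F)^\Gamma$. With that understood, your proof is a complete, self-contained replacement for the citation.
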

\begin{proof}
See Remark 5.6 of \cite{BBN01}.
\end{proof}

For a parabolic $G$-bundle $(E_*, B)$, let $\mathcal{E}nd(E_*)$ be the
subsheaf of $\mathcal{E}nd(E)$ defined by the sheaf endomorphisms
preserving the quasi-parabolic filtrations. (So $\mathcal{E}nd(E_*)$
is the vector bundle underlying the parabolic tensor
product $E_*\otimes E^\vee_*$.) Let $$\mathcal{E}nd_B(E_*)
\, \subset\, \mathcal{E}nd(E_*)$$ be the subbundle defined by the
sheaf of endomorphisms $\beta$ such that $B(\beta(v)\otimes w)+
B(v\otimes (w))\,=\, 0$ for all locally defined sections $v$ and $w$
of $E$.

\begin{prop}\label{p:normal}
Consider $s \in \mathcal{R}^{ss}$ as in Example \ref{ex-slice}.
 \begin{enumerate}
  \item The stabilizer of $s$ inside $\G$ is $G(\epsilon)$.
  \item The normal space to $\orb(s)$ at $s$ can be identified with
\[
 \mbox{H}^1(X,\mathcal{E}nd_B(E_*)) \otimes \mathfrak{g}(\epsilon).
\]
  \item The natural action of the stabilizer on the normal bundle can be identified with the adjoint action on the $\mathfrak{g}(\epsilon)$ factor above.
 \end{enumerate}
\end{prop}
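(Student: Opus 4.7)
The plan is to handle the three claims sequentially, with the tensor decomposition $(E_*,B)^{\oplus\epsilon}=(E_*,B)\otimes(k^{\oplus\epsilon},Q)$ — where $Q$ is the standard form on $k^{\oplus\epsilon}$ chosen so that $\widetilde{B}_0:=B\otimes Q$ is of the type required for $G(2n)$ — as the organizing idea. Smoothness of $\mathcal{R}^{ss}$ (from the preceding proposition) guarantees a clean tangent-plus-normal splitting at $s$, and the three parts then reduce respectively to a Schur-type computation, a deformation-theoretic identification of $H^1$, and a direct functoriality check.

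For (1), the main step is to show $\End_{\mathrm{par}}(E_*)=k$. Any parabolic endomorphism splits as a sum of its $B$-symmetric part and its $B$-skew part (the latter lying in $\mathcal{E}nd_B(E_*)$). Regular stability of $(E_*,B)$ forces $H^0(X,\mathcal{E}nd_B(E_*))=0$, so the skew part vanishes; the symmetric part then admits an orthogonal eigenspace decomposition of $E_*$, which would break the indecomposability of the regularly stable $(E_*,B)$ unless it is a scalar. Consequently $\End_{\mathrm{par}}((E_*)^{\oplus\epsilon})=k\otimes\End(k^{\oplus\epsilon})=M_\epsilon(k)$, and imposing compatibility with $\widetilde{B}_0=B\otimes Q$ cuts out the isometry group of $(k^{\oplus\epsilon},Q)$, namely $G(\epsilon)$. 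This identifies the stabilizer.

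For (2), by deformation theory at the polystable point $s$, the normal space $N_s$ to $\orb(s)$ is identified with $H^1(X,\mathcal{E}nd_{\widetilde{B}_0}(E_*^{\oplus\epsilon}))$, the first cohomology of the Lie algebra sheaf of the parabolic $G(2n)$-bundle $(E_*^{\oplus\epsilon},\widetilde{B}_0)$. The tensor splitting $(E_*^{\oplus\epsilon},\widetilde{B}_0)=(E_*,B)\otimes(k^{\oplus\epsilon},Q)$ induces a decomposition of $\mathcal{E}nd_{\widetilde{B}_0}$ into sym/skew tensor pieces coming from $\mathcal{E}nd(E_*)$ and $\End(k^{\oplus\epsilon})$, and matching these decompositions against the adjoint bundle of the outer group $G(\epsilon)$ shows that the piece contributing to the normal direction is $\mathcal{E}nd_B(E_*)\otimes\mathfrak{g}(\epsilon)$ (the remaining pieces either vanish by the regular stability of $(E_*,B)$ or are absorbed into the orbit tangent). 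Pulling the constant vector space $\mathfrak{g}(\epsilon)$ out of the cohomology then yields
\[
 N_s \,\cong\, H^1(X,\mathcal{E}nd_B(E_*))\otimes\mathfrak{g}(\epsilon).
\]

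For (3), the stabilizer $G(\epsilon)\hookrightarrow\G$ acts on $(E_*)^{\oplus\epsilon}=E_*\otimes k^{\oplus\epsilon}$ only through its standard action on the second tensor factor; its induced action on $\End(k^{\oplus\epsilon})=M_\epsilon(k)$ is conjugation, whose restriction to $\mathfrak{g}(\epsilon)\subset\End(k^{\oplus\epsilon})$ is exactly the adjoint representation, while the first factor $\mathcal{E}nd_B(E_*)$ is untouched. These actions pass to $H^1$ and to the tensor identification of part (2). The main technical hurdle lies in step (2): one must carefully track the four sym/skew tensor pieces in the decomposition of $\mathcal{E}nd_{\widetilde{B}_0}(E_*\otimes k^{\oplus\epsilon})$ and correctly separate what gets absorbed into the $\G$-orbit tangent from what constitutes the genuine normal direction, so that the clean tensor-product form of $N_s$ emerges.
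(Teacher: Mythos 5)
Your step (2) is where the argument breaks, and it breaks exactly at the bookkeeping you yourself flag as the main hurdle. The identification $N_s\cong H^1\bigl(X,\mathcal{E}nd_{\widetilde B_0}(E_*\otimes k^{\oplus\epsilon})\bigr)$ is fine, but the decomposition of this adjoint bundle is not the one you assert. For $\phi\in\mathcal{E}nd(E_*)$ and $m\in\mathrm{End}(k^{\oplus\epsilon})$ the $\widetilde B_0$-adjoint of $\phi\otimes m$ is $\phi^{*_B}\otimes m^{*_Q}$, so
\[
\mathcal{E}nd_{\widetilde B_0}\bigl(E_*\otimes k^{\oplus\epsilon}\bigr)\;=\;\bigl(\mathcal{S}\otimes\mathfrak{g}(\epsilon)\bigr)\,\oplus\,\bigl(\mathcal{E}nd_B(E_*)\otimes\mathfrak{m}\bigr),
\]
where $\mathcal{S}\subset\mathcal{E}nd(E_*)$ is the $B$-self-adjoint part and $\mathfrak{m}\subset M_\epsilon(k)$ is the $Q$-self-adjoint part: skew tensor skew is $\widetilde B_0$-\emph{symmetric}, so the piece $\mathcal{E}nd_B(E_*)\otimes\mathfrak{g}(\epsilon)$ you want to isolate does not occur in $\mathcal{E}nd_{\widetilde B_0}$ at all. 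Nor can the pieces that do occur be disposed of as you claim: regular stability gives $H^0(\mathcal{E}nd_B(E_*))=0$ and $H^0(\mathcal{S})=k$ (this is what pins down the stabilizer and matches $H^0$ of the adjoint bundle with $\mathfrak{g}(\epsilon)$), but it says nothing about $H^1$; in particular $H^1(\mathcal{S})\supseteq H^1(\mathcal{O}_X)$ has dimension at least the genus, and nothing in $H^1$ of the adjoint bundle is ``absorbed into the orbit tangent'', since $T_sQ_G/T_s\,\mathrm{orb}(s)\cong H^1(\mathrm{ad})$ is already the quotient by the orbit directions. A Luna dimension check makes this concrete: the slice theorem forces $\dim N_s=\dim M^{\rm ss}_{G(2n)}+\dim G(\epsilon)$ (generic stabilizers on the slice are finite), which is $h^1$ of the adjoint bundle displayed above and not $h^1(\mathcal{E}nd_B(E_*))\cdot\dim\mathfrak{g}(\epsilon)$ in general. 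So your derivation of part (2) — and of part (3), since the stabilizer also acts by conjugation on the $\mathfrak{m}$-factor — does not go through; if the stated formula is to be reached at all, the $\mathfrak{g}(\epsilon)$-isotypic part must come from $H^1(\mathcal{S})\otimes\mathfrak{g}(\epsilon)$ (in particular from the scalar summand $\mathcal{O}_X\subseteq\mathcal{S}$), which requires an argument your outline does not contain. Note the paper's own proof is a one-line appeal to the definition of regular stability, so precisely this bookkeeping is the content a real proof must supply.

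There is also a smaller gap in part (1). To get $\mathrm{End}_{\mathrm{par}}(E_*)=k$ you dismiss the $B$-self-adjoint part by saying a non-scalar self-adjoint endomorphism produces an orthogonal eigenspace decomposition. Its eigenvalues are indeed constant, but a self-adjoint endomorphism with a single eigenvalue and nonzero nilpotent part $N$ yields no decomposition; you must exclude nonzero self-adjoint nilpotents separately. This can be done (replace $N$ by a power so that $N^2=0$; the saturation $W$ of $\mathrm{im}\,N$ is isotropic, its perpendicular is the saturation of $\ker N$, and comparing $\mathrm{par\text{-}deg}(W)<0$ from stability with $\mathrm{par\text{-}deg}(\ker N)=-\mathrm{par\text{-}deg}(\mathrm{im}\,N)>0$ and $\mathrm{par\text{-}deg}(W^\perp)\le\mathrm{par\text{-}deg}(W)$ gives a contradiction), but it is a genuine extra step, not a consequence of indecomposability alone. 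Finally, in the orthogonal case the isometry group of $(k^{\oplus\epsilon},Q)$ is $\mathrm{O}(\epsilon)$ rather than $\mathrm{SO}(\epsilon)$, so identifying the stabilizer with $G(\epsilon)$ needs the determinant condition built into $Q_G$ to be invoked explicitly.
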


\begin{proof}
Recall that $(E_*, B)^{\oplus \epsilon}$ is Example \ref{ex-slice} is regularly stable. In view of the definition of a regularly stable bundle, the proposition follows.
\end{proof}

\begin{rem}\label{rem-Z}
Denote $g\,=\,\mbox{h}^1(X,\mathcal{E}nd_B(E_*))$. Note that $g \geq 2$. We 
write 
$$
Z=Z(\mathfrak{g}(\epsilon),g) = \mathfrak{g}(\epsilon)^{\otimes g}
$$
for the normal space to $\orb(s)$ at $s$ as in Proposition \ref{p:normal}.
This is the normal space that occurs in the diagram \eqref{e:diagram}.

Set $\Lambda=k \langle z_1,z_2,\cdots,z_g \rangle$ be a polynomial ring in $g$ 
non-commuting variables. A closed point of $Z$ determines a $\Lambda$-module 
structure on $k^{\epsilon}$. Denote by $Q$ the standard symplectic or 
orthogonal form on $k^{2n}$. There is an open subset $Z^s$ of $Z$ consisting of 
points where the corresponding $\Lambda$-module has no nontrivial isotropic 
submodules. This coincides with stable locus for the adjoint action of
$G(\epsilon)$, see \cite[Proposition 4.2]{CDL10}. 

Using the notation of \eqref{e:diagram} we define the following loci :
\begin{eqnarray*}
 (V/\!\!/H_s)^t &=& \pi_U^{-1}((U/\!\!/\G)^{\rm rs}) \cap \pi_W^{-1}(Z^s/H_s) \\
 (Z^s/H_s)^t &=& \pi_W((V/\!\!/H_s)^t) \\
 (U/\!\!/\G)^t &=& \pi_U(V/\!\!/H_s)^t)
\end{eqnarray*}
As the two varieties on the corners of the bottom row in \eqref{e:diagram} are 
irreducible, and all maps are \'etale,
these are non-empty open sets.

We have a marked point $0$ of the quotient $U/\!\!/\G$, corresponding to the 
bundle 
in Proposition \ref{prop-degen}. The diagram \eqref{e:diagram} induces an 
isomorphism
$$
\widehat{\struct_{U/\!\!/\G,0}} \,\cong\, 
\widehat{\struct_{Z/\!\!/G(\epsilon),0}}.
$$ 
We will denote this ring by 
$$
\widehat{\struct_{0}}.
$$
Finally we construct an open subscheme ${\rm Spec}(\widehat{\struct_{0}})^t$
of ${\rm Spec}(\widehat{\struct_{0}})$
by the following Cartesian diagram :
\[
 \xymatrix{
 {\rm Spec}(\widehat{\struct_{0}})^t \ar[r] \ar[d] & (V/\!\!/H_s)^t \ar[d] \\
 {\rm Spec}(\widehat{\struct_{0}}) \ar[r] & (V/\!\!/H_s)
}
\]

\end{rem}

\begin{prop}
\label{p:same}
 The classes of the three $\mu_2$-gerbes defined in \eqref{e:diagram}
are the same inside $${\rm Br}({\rm Spec}(\widehat{\struct_{0}})^t)$$.
\end{prop}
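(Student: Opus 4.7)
The plan is to exploit 2-Cartesian squares between the middle and bottom rows of \eqref{e:diagram} that are furnished by the strongly \'etale property of the horizontal morphisms; once these are in place, the equality of the three Brauer classes upon restriction to $\spec(\widehat{\struct_{0}})^t$ follows formally from functoriality of the class of a $\mu_2$-gerbe under base change.

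First I would recall, as an auxiliary remark, that a strongly \'etale $\G$-equivariant morphism $S \to T$ not only induces an \'etale morphism $S/\!\!/\G \to T/\!\!/\G$ but in fact yields an isomorphism $S \cong T \times_{T/\!\!/\G} (S/\!\!/\G)$ of $\G$-schemes; this is part of the standard conclusion of Luna's slice theorem as formulated in \cite{jmD04}. Applied to the two morphisms $\G \times_{H_s} V \to U$ and $\G \times_{H_s} V \to W$ in the top row of \eqref{e:diagram}, and combined with the identification $[(\G \times_{H_s} V)/\G] \cong [V/H_s]$, this shows that the squares
\[
\xymatrix{
[V/H_s] \ar[r] \ar[d] & [U/\G] \ar[d] \\
V/\!\!/H_s \ar[r]^{\pi_W} & U/\!\!/\G
}
\qquad
\xymatrix{
[V/H_s] \ar[r] \ar[d] & [W/\G] \ar[d] \\
V/\!\!/H_s \ar[r]^{\pi_U} & W/\!\!/\G
}
\]
are 2-Cartesian, so that the middle-row gerbe over $V/\!\!/H_s$ is simultaneously the pullback of the two corner gerbes along $\pi_W$ and $\pi_U$.

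Next I would restrict along $\spec(\widehat{\struct_{0}})^t \to V/\!\!/H_s$, on which each vertical arrow of \eqref{e:diagram} is a genuine $\mu_2$-gerbe by Example \ref{ex-slice}, Proposition \ref{p:normal} and Remark \ref{rem-Z}. Since the Brauer class of a $\mu_2$-gerbe is functorial under base change, the 2-Cartesian squares above identify the pulled-back class of $[V/H_s] \to V/\!\!/H_s$ with the pullback of the class of $[U/\G] \to U/\!\!/\G$ along the composite $\spec(\widehat{\struct_{0}})^t \to V/\!\!/H_s \to U/\!\!/\G$, and simultaneously with the pullback of the class of $[W/\G] \to W/\!\!/\G$ along the analogous composite involving $\pi_U$. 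This yields the desired equality inside $\Br(\spec(\widehat{\struct_{0}})^t)$.

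The main obstacle I anticipate is rigorously justifying the 2-Cartesian squares: the definition of ``strongly \'etale'' recorded in the excerpt only encodes \'etaleness on the GIT quotients, whereas the argument requires the finer statement that the original $\G$-morphism is the base change of its GIT quotient. If Drezet's text does not already package this in, I would invoke the precise output of Luna's slice theorem, which constructs the slice $V$ so that $\G \times_{H_s} V \to U$ is Cartesian over $V/\!\!/H_s \to U/\!\!/\G$ on a neighborhood of $s$; alternatively, one can bypass the slice identification by arguing directly with \v{C}ech $2$-cocycles, using that all three gerbes are banded by the central $\mu_2 \subset \G$ and encode the same obstruction to descending the relevant universal object.
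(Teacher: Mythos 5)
Your proposal is correct, but it takes a different route from the paper, whose entire proof is a citation of Giraud \cite[Chapter IV, 2.3.18]{jG71}: the relevant fact there is that a morphism of gerbes over a common base that is compatible with the band (here the central $\mu_2$) is automatically an equivalence, so the existence of the comparison morphisms from the middle column of \eqref{e:diagram} to the two outer columns, restricted over ${\rm Spec}(\widehat{\struct_{0}})^t$, already forces the three classes to coincide --- no Cartesian property of the horizontal maps is needed. You instead upgrade the strongly \'etale morphisms of the slice theorem to 2-Cartesian squares between the quotient stacks and the GIT quotients and then invoke base-change functoriality of the class. This is legitimate: although the paper's abridged definition of ``strongly \'etale'' records only \'etaleness of $S/\!\!/H \to T/\!\!/H$, Dr\'ezet's actual definition (and Luna's theorem as used here) includes the $\G$-isomorphism $\G\times_{H_s}V \cong U\times_{U/\!\!/\G}(V/\!\!/H_s)$ (and likewise for $W$), which together with $[(\G\times_{H_s}V)/\G]\cong[V/H_s]$ gives exactly your 2-Cartesian squares; so your anticipated ``main obstacle'' is resolved by quoting the full statement rather than the paper's shorthand. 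Your approach buys the stronger conclusion that the middle gerbe is literally the pullback of each corner gerbe along the \'etale maps of quotients, at the cost of needing this finer input; the paper's argument is shorter and needs only the gerbe-theoretic rigidity result, and indeed your closing fallback (arguing via band-compatible morphisms of $\mu_2$-gerbes) is essentially the paper's proof. The only blemish is notational: your labelling of $\pi_U$ and $\pi_W$ follows the arrows in \eqref{e:diagram} rather than their use in Remark \ref{rem-Z}, an inconsistency already present in the paper and without mathematical consequence.
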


\begin{proof}
This follows from \cite[Chapter IV, 2.3.18]{jG71}.
\end{proof}

\section{Twisted Sheaves}\label{sec-twisted}

Consider a gerbe $\sG\longrightarrow S$ banded by a sheaf of abelian groups $A$ 
that is a subgroup
of $\gm$. A coherent sheaf $\F$ on $\sG$ has two actions of $A$ on it, the inertial action
and a second action by viewing $A$ as a subsheaf of $\struct_{\sG}$. A \emph{twisted sheaf}
on $\sG$ is a coherent sheaf where these two actions coincide.

If $\F$ is a locally free twisted sheaf on $\sG$ then $\F\otimes\F^\vee$ descends to a 
Azumaya algebra on $S$ whose Brauer class is the same as the Brauer class of
$\sG$, see the proof of \cite[Proposition 3.1.2.1]{mL08}. 

\begin{example}\label{e:nori}
 The natural action of $G(\epsilon)$ on $k^\epsilon$ produces a twisted sheaf
on $[Z/G(\epsilon)]$. Explicitly, the action of $G(\epsilon)$ on $Z$ extends
to an action on the trivial bundle $Z\times k^\epsilon$. The Azumaya algebra on
$ Z^s/\!\!/G(\epsilon)$ associated to this twisted sheaf will be denoted
$\mathcal{B}$. The corresponding trivial Azumaya algebra on $Z^s$ will be denoted $\mathcal{A}$. Note that $\mathcal{B}$ pulls back to $\mathcal{A}$ under $\pi$.
\end{example}

Twisted sheaves are a useful tool for understanding the difference between the period and the index. Let
us assemble the pertinent results.

\begin{prop}\label{prop-periodindex}
When $S\,=\,\spec(K)$ in the above situation the period divides the index and the 
period and index have the same prime factors.
\end{prop}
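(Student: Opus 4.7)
The plan is to reduce both assertions to classical facts about central simple algebras over a field, exploiting the link between locally free twisted sheaves and Azumaya algebras recalled just before the statement. Since $S = \spec(K)$, an Azumaya algebra on $S$ is the same as a central simple $K$-algebra, so one can work directly with the Brauer group $\Br(K)$.

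For the divisibility "period divides index", I would first choose a locally free twisted sheaf $\F$ on $\sG$ whose rank realizes the index. By the paragraph preceding the statement, $\F \otimes \F^\vee$ descends to an Azumaya algebra of degree $\mathrm{rk}(\F)$ on $S = \spec(K)$, with Brauer class equal to that of $\sG$. It is classical that the order in $\Br(K)$ of the class of a central simple $K$-algebra divides its degree; this follows either from the Kummer-sequence identification of the $n$-torsion of $\Br(K)$ with classes represented by degree-$n$ algebras, or from a direct argument using reduced norms. Hence the period divides $\mathrm{rk}(\F) = \mathrm{ind}(\sG)$.

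For the equality of prime factors, the direction "prime divides period $\Rightarrow$ prime divides index" is immediate from the divisibility just established. For the converse I would invoke the primary decomposition $\Br(K) = \bigoplus_p \Br(K)\{p\}$ and write the class of $\sG$ as a finite sum $\sum_p \alpha_p$ where each $\alpha_p$ has $p$-primary period. The classical theorem of Brauer, that the index is multiplicative on such a decomposition, together with the fact that each $\alpha_p \in \Br(K)\{p\}$ has index a power of $p$ (namely the degree of the unique division algebra in its class), then shows that the set of primes dividing the index of $[\sG]$ coincides with the set of primes $p$ with $\alpha_p \neq 0$, which is precisely the set of primes dividing the period.

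I do not foresee a genuine obstacle: both ingredients are textbook material on the Brauer group of a field, and one expects the authors simply to cite a standard reference. The only nuance requiring care is confirming that the two notions of "index" in play -- the minimum rank of a locally free twisted sheaf on $\sG$ on the one hand, and the index of $[\sG]$ as a Brauer class on the other -- agree, and this compatibility is already built into the twisted-sheaf formalism recalled in the preceding paragraph.
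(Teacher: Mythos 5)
Your argument is correct, and it is exactly the classical Brauer-group material that the paper invokes: the authors give no proof at all, remarking only that the statement ``is well known'' and citing Dennis--Farb \cite{DF93}. Your write-up (period divides the degree of a representing algebra obtained from a twisted sheaf of minimal rank, plus primary decomposition and multiplicativity of the index for the prime-factor statement) is the standard proof hiding behind that citation, so there is nothing to correct.
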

\begin{proof}
This is well known; for example, see \cite{DF93}.
\end{proof}

\begin{prop}\label{prop-twisted}
Let $\sG\longrightarrow\spec(K)$ be a $\gm$-gerbe over a field. Then the index of $\sG$ divides $m$ if and only if there is a locally free twisted sheaf on $\sG$ of rank $m$.
\end{prop}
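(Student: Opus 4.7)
The plan is to invoke the standard equivalence between locally free twisted sheaves on a $\gm$-gerbe over a field and central simple algebras representing the Brauer class of the gerbe. Concretely, this equivalence sends a locally free twisted sheaf $\F$ of rank $m$ on $\sG$ to the Azumaya algebra $\shom_{\sG}(\F,\F)$ on $\spec(K)$, which (as recalled just before the proposition, following \cite{mL08}) descends from $\sG$, has degree $m$, and has Brauer class equal to $[\sG]$.

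Granting this correspondence, the $(\Leftarrow)$ direction is immediate: a locally free twisted sheaf of rank $m$ produces a central simple algebra of degree $m$ in the class $[\sG]$, so the index of $\sG$ divides $m$ by the very definition of the index.

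For $(\Rightarrow)$, suppose $d$ is the index of $\sG$ and $d\mid m$, and let $D$ be the unique division algebra (of degree $d$) representing the Brauer class $[\sG]$. Then $A=M_{m/d}(D)$ is a central simple algebra of degree $m$ that still represents $[\sG]$. The nontrivial step is to realize $A$ as the endomorphism algebra of a locally free twisted sheaf of rank $m$ on $\sG$; equivalently, one recognizes $A$ as the Azumaya algebra attached to a tautological rank-$m$ twisted sheaf on the gerbe $\sG_A$ of trivializations of $A$, and uses the canonical equivalence $\sG_A\simeq\sG$ coming from the identification $\Br(K)\simeq\coh{2}{\spec K,\gm}$.

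The main (and essentially only) obstacle is this converse construction of a twisted sheaf from a central simple algebra; it is a well-known piece of the dictionary between the Brauer group and $\gm$-gerbes over a field, and in the paper the authors simply appeal to \cite{DF93}. Once this is in hand, the divisibility statement is just the bookkeeping indicated above.
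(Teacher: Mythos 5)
Your argument is correct, and it is essentially a reconstruction of the result the paper does not prove at all: the paper's ``proof'' of this proposition is the single citation to \cite[Proposition 3.1.2.1]{mL08}, so there is no internal argument to compare against. Your sketch is the standard dictionary: the $(\Leftarrow)$ direction is exactly the remark made just before the proposition ($\F\otimes\F^\vee$ descends to an Azumaya algebra of degree $m$ in the class of $\sG$, and the index of a Brauer class divides the degree of any representative), and the $(\Rightarrow)$ direction via $A=M_{m/d}(D)$, the gerbe of trivializations $\sG_A$ with its tautological rank-$m$ twisted sheaf, and the identification $\sG_A\simeq\sG$ is precisely the content of Lieblich's cited proposition. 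The one point worth stating explicitly is that two $\gm$-gerbes over $\spec(K)$ with the same class in $\coh{2}{K,\gm}$ are equivalent \emph{as banded gerbes} (banded $\gm$-gerbes are classified by $\coh{2}{}$), which is what guarantees that twisted sheaves transport across the equivalence $\sG_A\simeq\sG$; over a field one also uses $\Br(K)=\coh{2}{K,\gm}$ so that every class is represented by a central simple algebra. Two small corrections of bookkeeping: the appeal to \cite{DF93} in the paper is for Proposition \ref{prop-periodindex} (period divides index and they share prime factors), not for this statement; and in the $(\Rightarrow)$ direction you could alternatively take the rank-$d$ twisted sheaf coming from $D$ and form the direct sum of $m/d$ copies, though this still requires the same converse construction to get off the ground.
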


\begin{proof}
See \cite[Proposition 3.1.2.1]{mL08}.
\end{proof}

Now consider the canonical $\mu_2$-gerbe $\text{Bun}^{\rm rs}_{G}\longrightarrow 
M^{\rm 
rs}_{G}$. In Remark \ref{rem-period2}, we have assumed 
that $\epsilon$ is even. Hence the period of the canonical gerbe is 2, and by 
Proposition \ref{prop-periodindex}, its index (over the function field of $M^{\rm rs}_{G}$) is a power of 2.

{}From now on, we will write $\epsilon=2m$.

\begin{prop}\label{prop-indexdivides}
The index of the canonical gerbe $\text{Bun}^{\rm rs}_{G}\longrightarrow M^{\rm 
rs}_{G}$ divides $\epsilon$.
\end{prop}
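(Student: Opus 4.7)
The plan is to apply Proposition \ref{prop-twisted} after restricting to the generic point $\eta$ of $M^{\rm rs}_G$: since the index of a Brauer class divides the rank of any locally free twisted sheaf representing the class, it suffices to exhibit twisted locally free sheaves on the generic fiber of the gerbe whose ranks have $\epsilon$ as their greatest common divisor. Concretely, I will produce twisted sheaves of ranks $2n$, each $n_{j,i}$, and an integer of the form $d+2n(m'+1-g(X))$ for some large $m'$; by the definition \eqref{dede} of $\epsilon$, this gives the desired divisibility.

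First, the universal parabolic bundle $\mathcal{E}$ on $X \times \text{Bun}^{\rm rs}_G$ is itself a twisted sheaf for the canonical gerbe, because the center $\mu_2 \subset G$ acts as $\pm 1$ on the standard representation $k^{2n}$. Restriction of $\mathcal{E}$ to $\{x\} \times \text{Bun}^{\rm rs}_G$ for any $x \in X \setminus D$ yields a twisted locally free sheaf of rank $2n$. At a parabolic point $x_i \in D$, each graded piece of the quasi-parabolic filtration of $\mathcal{E}|_{\{x_i\} \times \text{Bun}^{\rm rs}_G}$ is a twisted locally free sheaf of rank $n_{j,i}$. By Proposition \ref{prop-twisted} applied over $\eta$, the index divides $2n$ and every $n_{j,i}$.

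To bring $d$ into play, I fix $x_0 \in X$ and an integer $m'$ so large that $H^1(X, \mathcal{E}_b(m' x_0))$ vanishes for every $b$ in an open substack of $\text{Bun}^{\rm rs}_G$ containing the fiber over $\eta$. On such a substack, $p_*(\mathcal{E}(m' x_0))$ is locally free of rank $\chi(\mathcal{E}_b(m' x_0))=d+2n(m'+1-g(X))$ by Riemann--Roch, and it inherits the twisting from $\mathcal{E}$. Proposition \ref{prop-twisted} then gives that the index divides $d+2n(m'+1-g(X))$; combined with divisibility by $2n$ from the previous paragraph this yields divisibility by $d$. The greatest common divisor of the ranks produced -- namely $2n$, $d$, and the $n_{j,i}$ -- is exactly $\epsilon$, completing the argument.

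The main technical point is the local-freeness step: for a fixed $m'$, local freeness of $p_*\mathcal{E}(m' x_0)$ holds only on the open substack where $R^1 p_*\mathcal{E}(m' x_0)$ vanishes. Since we only need to compute the index generically, any such open substack meeting $\eta$ suffices, and its existence follows from standard cohomology-and-base-change arguments once $m'$ is large enough.
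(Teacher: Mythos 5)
Your argument is correct and follows essentially the same route as the paper's proof: graded pieces of the universal bundle at the parabolic points give twisted sheaves of ranks $n_{j,i}$ (hence also $2n$), and the pushforward $p_*(\mathcal{E}\otimes p^*\mathcal{O}_X(m'x_0))$ for $m'\gg 0$ gives a twisted sheaf of rank $d+2n(m'+1-g(X))$, so Proposition \ref{prop-twisted} yields divisibility by $\epsilon$. Your version merely spells out the cohomology-and-base-change and generic-point details that the paper leaves implicit.
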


\begin{proof}
To see that the index of the canonical gerbe divides the $n_{k,i}$ corresponding to a point $x_i$, one considers the restriction of the universal parabolic bundle on $X \times M^{\rm rs}_{G}$ to $x_i \times M^{\rm rs}_{G}$ and takes the graded piece corresponding to $n_{k,i}$.

To complete the proof, we have to produce a twisted sheaf of rank $d$ on 
$\text{Bun}^{\rm rs}_{G}$. For $k \gg 0$, one has $R^1 p_*(\mathcal{E} \otimes 
p^* \mathcal{O}_{X}(k))=0$. Since we are over the regularly stable locus, 
$p_*(\mathcal{E})$ is a twisted sheaf of rank $d+2n(1-g)$. This finishes the proof by Proposition \ref{prop-twisted}.
\end{proof}

\begin{cor}
Assume $G=SO(4n)$ and $\epsilon=2$. Then the period and index of the canonical gerbe are both 2.
\end{cor}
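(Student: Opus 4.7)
The plan is to combine the three ingredients that have just been established in this section, together with Proposition \ref{prop-p}.

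First I would observe that under the hypothesis $G = \mathrm{SO}(4n)$ and $\epsilon = 2$, Proposition \ref{prop-p} applies (the third case of the proposition covers $\epsilon = 2$ for $\mathrm{SO}(4n)$) and produces the conclusion that there is no Poincar\'e vector bundle on $X \times M^{\rm rs}_{G}$. By the discussion in Remark \ref{rem-period2}, non-existence of a Poincar\'e bundle is equivalent to the canonical $\mu_2$-gerbe $\text{Bun}^{\rm rs}_{G} \longrightarrow M^{\rm rs}_{G}$ being non-neutral. Since it is a $\mu_2$-gerbe, its period divides $2$; combined with non-triviality this forces the period to be exactly $2$.

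Next I would invoke Proposition \ref{prop-indexdivides}, which states that the index of the canonical gerbe divides $\epsilon$. With $\epsilon = 2$, this gives an upper bound: the index divides $2$, so the index is either $1$ or $2$. On the other hand, Proposition \ref{prop-periodindex} says that the period divides the index. Since we have just shown the period is $2$, the index must be at least $2$. Combining the two bounds, the index equals $2$.

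There is no real obstacle here; the corollary is a direct bookkeeping consequence of the three preceding propositions, applied to the specific numerical data $G = \mathrm{SO}(4n)$, $\epsilon = 2$. The only point requiring slight care is to verify that Proposition \ref{prop-p} genuinely covers this case, which it does via its fourth assertion.
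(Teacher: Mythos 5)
Your argument is correct and essentially the same as the paper's: Proposition \ref{prop-p} (the $\epsilon=2$, ${\rm SO}(4n)$ assertion) gives period $2$, Proposition \ref{prop-indexdivides} gives index dividing $\epsilon=2$, and the fact that the period divides the index (Proposition \ref{prop-periodindex}) pins the index at $2$ — the paper leaves this last step as ``the result follows,'' which you simply make explicit. (Minor slip: you first call the relevant case of Proposition \ref{prop-p} the third assertion and later the fourth; it is the fourth.)
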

\begin{proof}
By Proposition \ref{prop-p}, there is no Poincar\'e vector bundle on $X \times M^{\rm rs}_{SO(4n)}$. Hence, the period of the canonical gerbe is 2. By Proposition \ref{prop-indexdivides}, the index divides $\epsilon=2$. The result follows.
\end{proof}

\section{Stable central simple algebras with 
involution}\label{sec-stable-triple}

\begin{defn}
Let $A$ be a central simple algebra of degree $(2m)^2$ with involution $\sigma$ over a 
field $F$. An $F$-subalgebra $B \subseteq A$ is called \emph{parabolic} if for 
some finite field extension $K/F$ that splits $(A,\sigma)$, and an isomorphism 
$\phi:A_K \longrightarrow \mbox{End}(K^{2m},Q_0)$, where $Q_0$ is the standard 
symplectic or 
orthogonal form, $B_K$ leaves a (nontrivial) totally isotropic subspace $W 
\subseteq K^{2m}$ invariant.
\end{defn}

\begin{prop}
The definition above is independent of the extension $K/F$ and 
$\phi:A_K\longrightarrow \mbox{End}(K^{2m},Q_0)$.
\end{prop}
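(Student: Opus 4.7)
The plan is to split the independence claim into two parts: independence of the isomorphism $\phi$ when $K$ is fixed, and independence of the splitting field $K$ itself. The first part is a direct application of Skolem--Noether for algebras with involution; the second reduces to the first over a common overfield, modulo a Galois-descent step which is the main obstacle.

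For the first part, given two isomorphisms $\phi_1,\phi_2\colon A_K\tildearrow\End(K^{2m},Q_0)$ of $K$-algebras with involution, the composite $\phi_2\circ\phi_1^{-1}$ is an involution-preserving $K$-automorphism of $\End(K^{2m},Q_0)$, hence by Skolem--Noether for algebras with involution equals $\mathrm{Ad}(g)$ for some similitude $g$ in $\mathrm{GSp}_{2m}(K)$ or $\mathrm{GO}_{2m}(K)$. Since similitudes rescale $Q_0$ by a nonzero scalar, they preserve totally isotropic subspaces; hence $W\subseteq K^{2m}$ is nontrivial, totally isotropic and $\phi_1(B_K)$-invariant if and only if $g(W)$ has the same properties with respect to $\phi_2(B_K)$.

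For the second part, take two finite splitting extensions $K_1,K_2$ of $(A,\sigma)$, embed both inside $\overline F$, and set $L:=K_1K_2\subseteq\overline F$, again a finite splitting extension. Base change gives $\phi_i^L\colon A_L\tildearrow\End(L^{2m},Q_0)$. A nontrivial totally isotropic $\phi_1(B_{K_1})$-invariant subspace $W_1\subseteq K_1^{2m}$ extends to $W_1\otimes_{K_1}L\subseteq L^{2m}$, which has the same properties for $\phi_1^L(B_L)$; applying the first part over $L$ then yields a nontrivial totally isotropic $\phi_2^L(B_L)$-invariant subspace $\widetilde W\subseteq L^{2m}$.

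The main obstacle is to descend $\widetilde W$ from $L$ to a $\phi_2(B_{K_2})$-invariant nontrivial totally isotropic subspace of $K_2^{2m}$. My plan is to rephrase the condition intrinsically using the closed $F$-subscheme $\mathcal{Y}_j\subseteq\operatorname{IGr}_j(A,\sigma)$ of the involution variety whose $K$-points, for any splitting field $K$, parametrise $\sigma$-isotropic right ideals of $A_K$ of reduced dimension $j$ that are stable under left multiplication by $B_K$; by the first part this scheme is well-defined (independent of $\phi$), and its $K$-points correspond bijectively to nontrivial totally isotropic $B_K$-invariant $j$-dimensional subspaces of $K^{2m}$. Thus $\mathcal{Y}_j(L)\neq\emptyset$ for some $j$, and we must show $\mathcal{Y}_{j'}(K_2)\neq\emptyset$ for some $j'$. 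Replacing $L$ by its Galois closure over $K_2$, assume $L/K_2$ is Galois with group $\Gamma$, so that $\mathcal{Y}_j(K_2)=\mathcal{Y}_j(L)^\Gamma$. To produce a $\Gamma$-fixed point, take $\widetilde W$ of minimal dimension and consider its $\Gamma$-orbit, regarded as a collection of right ideals in $A_L$: their intersection is $\Gamma$-invariant, $B_L$-stable and totally isotropic, and either it is non-zero---giving the desired descended point---or a careful induction on the size of the Galois orbit, working with sums inside a sufficiently large isotropic envelope, produces a $\Gamma$-fixed point of some $\mathcal{Y}_{j'}$.
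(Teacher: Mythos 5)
Your first step (Skolem--Noether for the algebra with involution, so that two splittings over the same field differ by conjugation by a similitude, which preserves totally isotropic invariant subspaces) is correct, and together with passing to a common overfield it is essentially the paper's unstated ``one easily reduces'' step. After that reduction the paper's entire proof consists of one further ingredient, its Lemma \ref{lem-aji}: if a subgroup (or subalgebra) of matrices defined over the smaller field acquires an invariant subspace of dimension $m'$ after extension to the larger field, then it already has an invariant subspace over the smaller field; this is proved there by looking at the fixed locus of the action on the Grassmannian $\mathrm{Grass}(m',2m)$, which is defined over the smaller field, and isotropy descends for free because it can be checked after base extension.

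That descent statement is exactly what you call the ``main obstacle,'' and your sketch does not establish it, so the proof is incomplete at the one point that carries all the content. Writing $\mathcal{Y}_j(K_2)=\mathcal{Y}_j(L)^\Gamma$ merely translates the problem: extracting a $\Gamma$-fixed point from a nonempty $\Gamma$-set is the whole difficulty, and the manipulations you propose do not produce one. The intersection of the $\Gamma$-orbit of $\widetilde W$ is typically zero; the sum of totally isotropic subspaces is in general not totally isotropic, so there is no ``isotropic envelope'' inside which to sum conjugates; and an induction on the size of the Galois orbit has no mechanism forcing a fixed point --- a priori $\Gamma$ could act on every $\mathcal{Y}_j(L)$ without fixed points, and nothing in your argument rules this out (note that a $K_2$-subalgebra can perfectly well act irreducibly on $K_2^{2m}$ while acquiring invariant subspaces over an extension, so the fixed point cannot come from formal orbit bookkeeping alone; it must use that $B$ and the isotropy condition are defined over $K_2$, which is what the paper's Grassmannian argument is designed to exploit). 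To complete your proof you would need to prove the descent lemma itself --- i.e.\ study the fixed locus of $B$ acting on the isotropic Grassmannian over $K_2$, as the paper does --- or supply some comparable input; as written, the dichotomy ``either the intersection is nonzero or a careful induction produces a fixed point'' is an assertion, not an argument.
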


\begin{proof}
One easily reduces the question to the following situation: we have a finite 
field extension $L/K$ and a splitting 
$$
\alpha_K : A_K \longrightarrow \mbox{End}(K^{2m},Q_0)
$$
such that upon base extension to $L$ there is an isotropic subspace $W$ of $L^{2m}$ 
preserved by $B_L$. The result now follows from Lemma \ref{lem-aji}.
\end{proof}

\begin{lem}\label{lem-aji}
Let $F_1 / F_2$ be an extension of fields, and let $P \subseteq {\rm GL}(m,
F_2)$ be a subgroup that becomes parabolic under base extension to $F_1$, 
preserving a subspace $V \subseteq F_1^m$ of dimension $m'$. Then there exists a 
subspace $V'\subseteq F_2^m$ of dimension $m'$, base extending to $V$, that is 
left invariant by $P$.
\end{lem}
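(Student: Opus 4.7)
The plan is to recast the question as a descent problem on a Grassmannian and to exploit the classical fact that a parabolic subgroup of $\mathrm{GL}_m$ stabilises a unique flag. Let me introduce the closed $F_2$-subscheme
\[
Y \;\subseteq\; \mathrm{Gr}(m', m)_{F_2}
\]
cut out by the $P$-invariance conditions $g\cdot W \subseteq W$, $g \in P$; these equations are defined over $F_2$ since $P \subseteq \mathrm{GL}(m, F_2)$. Thus $Y$ is a projective $F_2$-scheme whose $L$-points, for any field extension $L/F_2$, are precisely the $P$-invariant $m'$-dimensional subspaces of $L^m$, and the lemma reduces to producing a point $V' \in Y(F_2)$ that base-extends to $V$.

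For any field extension $L/F_1$, the base change of the parabolic $P_{F_1}$ to $L$ is again parabolic, and the only subspaces of $L^m$ it preserves are those in the unique flag it stabilises; hence $|Y(L)| \leq 1$. Applied to $L = F_1$ this yields $Y(F_1) = \{V\}$. I would then pick any field $\Omega$ containing both $F_1$ and an algebraic closure $\overline{F_2}$ as $F_2$-algebras — for instance the residue field of $F_1 \otimes_{F_2} \overline{F_2}$ at a maximal ideal — which forces $|Y(\overline{F_2})| \leq |Y(\Omega)| \leq 1$. Since $Y$ is non-empty and projective over $F_2$, it has a closed point whose residue field is finite over $F_2$ and hence embeds into $\overline{F_2}$, so $Y(\overline{F_2})$ is non-empty. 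Consequently $Y(\overline{F_2}) = \{\bar V\}$ has a single element, which is automatically $\mathrm{Gal}(\overline{F_2}/F_2)$-fixed and therefore $F_2$-rational. The resulting $V' \in Y(F_2)$ must base-extend to the unique element of $Y(F_1)$, namely $V$.

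The main technical input is the claim in the second paragraph that a parabolic subgroup of $\mathrm{GL}_m$ preserves no subspaces beyond those in its canonical flag. While uniqueness of the stabilised flag itself is standard, this stronger assertion requires the Levi decomposition $P_L = M \cdot U$: invariance under the Levi factor $M$ forces every graded piece $(W \cap V_i)/(W \cap V_{i-1})$ of an invariant subspace $W$ to be either zero or all of $V_i/V_{i-1}$ (since $M$ acts irreducibly on each $V_i/V_{i-1}$), and invariance under the unipotent radical $U$ then rules out any ``off-flag'' jumps.
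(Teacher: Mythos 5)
Your argument is correct, and it follows the same basic route as the paper --- descent of a fixed point on the Grassmannian --- but it supplies the mechanism that the paper's two-line proof only asserts. The paper says that since the action of $P_{F_1}$ on $\mathrm{Grass}(m',m)$ comes from an action of $P$ defined over $F_2$, the fixed point $Q_1$ must arise from a fixed point $Q_2$ over $F_2$; as written this is not justified, since a nonempty $F_2$-scheme of invariant subspaces need not have an $F_2$-point, let alone one base-extending to $Q_1$. Your extra input --- that a parabolic subgroup of $\mathrm{GL}_m$ preserves no $m'$-dimensional subspace other than the member of its canonical flag, proved via the Levi decomposition and the unipotent radical --- is exactly what makes the descent work: it gives $|Y(L)|\le 1$ for every field $L\supseteq F_1$, and then projectivity of $Y$ together with Galois-invariance of the unique $\overline{F_2}$-point forces $F_2$-rationality (here one uses characteristic $0$, which holds since all fields contain $k$), and uniqueness over $F_1$ identifies $V'\otimes_{F_2}F_1$ with $V$. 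Two small points are worth making explicit: invariance of a subspace under the abstract group $P$ upgrades to invariance under the algebraic group $(P_{F_1})_L$ because stabilizers are Zariski closed and the closure of $P$ in $\mathrm{GL}_{m,L}$ is the base change of its closure over $F_1$; and, just like the paper's own argument, your proof requires reading ``becomes parabolic'' as saying that this closure is an honest parabolic subgroup, not merely that $P$ is contained in one after base extension --- under the weaker reading the uniqueness step fails and the statement itself would be false.
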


\begin{proof}
Since $P_{F_1}$ preserves $V$, the action of $P_{F_1}$ on $\mbox{Grass}(m',m)$ 
has a fixed point, which we will denote by $Q_1$. Since this action is
obtained from the action of $P$ on $\mbox{Grass}(m',m)$, it follows that the
action must also have a 
fixed point $Q_2$ which gives $Q_1$ upon base extension. Hence there 
must be a $P$-invariant subspace $V' \subseteq F_2^m$ base extending to $V$.
\end{proof}

We now give the definition of a stable central simple algebra (CSA for
short) with involution. We remind the reader that the number $g \geq 2$ was 
defined in Remark \ref{rem-Z}.

\begin{defn}
Let $A$ be a central simple algebra of degree $(2m)^2$ with involution $\sigma$ 
over a field $F$. Let $x_1, \cdots, x_g \in A$ be elements such that
$\sigma(x_i)=-x_i$. 
The triple $(A,\sigma,x_i)$ is called a \emph{stable central simple algebra with
involution} over $F$ if the $F$-subalgebra of $A$ generated by $x_i$ is not
contained in a  parabolic subalgebra.
\end{defn} 
\begin{rem}
Note that the degree $2m$ of the central simple algebra is implicit in the definition.
\end{rem}

\begin{example}
\label{e:triple}
Recall the construction of the Azumaya algebra $\mathcal{B}$ from Example
\ref{e:nori}. We now describe its construction in more detail to explain how
the
``universal stable CSA with involution'' is formed.

Recall that there is an action of $G(\epsilon)$ on $Z$ by conjugation. Consider 
the split Azumaya algebra of degree $\epsilon$ on $Z$ defined by the algebra of 
endomorphisms of $k^{\epsilon}$. Using the standard symplectic or orthogonal 
bilinear form on $k^{\epsilon}$, one can construct a canonical symplectic or 
orthogonal involution on $End_k(k^{\epsilon})$. We denote this involution by 
$r$. (We refer the reader to Appendix \ref{sec:csa-inv} for the details.) It is 
an easy exercise to show that $r$ descends to $Z^s/G(\epsilon)^{ad}$. Hence we 
get an Azumaya algebra with symplectic or orthogonal involution 
$(\mathcal{B},r)$ on $Z^s/G(\epsilon)^{ad}$.

Finally, we need to construct $g$ sections $x_1, \ldots, x_g$ of $\mathcal{B}$ 
such that, for any field $F$ and any map $\mbox{Spec }F\longrightarrow 
Z^s/G(\epsilon)^{ad}$, 
the pullback of $\mathcal{B}$ along with $r$ and the $x_i$ give a stable CSA 
with involution over $F$. Recall that $Z=\mathfrak{g}(\epsilon)^{\times g}$. We 
define a section $x_i$ of the split Azumaya algebra of degree $\epsilon$ on $Z$ 
by first taking the $i^{th}$ projection $Z\longrightarrow 
\mathfrak{g}(\epsilon)$ and 
composing with the inclusion $\mathfrak{g}(\epsilon)\longrightarrow 
End_k(k^{\epsilon})$. 
Again, one can check that these sections descend to $Z^s/G(\epsilon)^{ad}$ and they define a stable CSA with involution.
\end{example}

\section{The field-valued points of $Z^s/G(\epsilon)^{ad}$}\label{sec-fieldvalued}

In this section, for a field $F$ containing $k$, we describe the $F$-valued points of 
$Z^s/G(\epsilon)^{ad}$ in terms of the stable central simple algebras (CSA) with 
involution defined in Section \ref{sec-stable-triple}. 
Let ${\rm Fields}/k$ denote the category of field extensions of $k$.
Let $$\Phi_1: {\rm Fields}/k \longrightarrow 
(\mbox{Set})$$ be the functor that sends any
$F$ to the set of isomorphism classes of stable CSAs with 
involution over $F$. 
Given a morphism $\mbox{Spec }K \longrightarrow \mbox{Spec }F$, the 
corresponding map 
$\Phi_1(\mbox{Spec }F) \longrightarrow \Phi_1(\mbox{Spec }K)$ is defined by 
pull-back. 
We also consider the  functor of points
 $$\Phi_2: {\rm Fields}/k \longrightarrow (\mbox{Set})$$
 that takes $\mbox{Spec }F$ to the set $\mbox{Mor}(\mbox{Spec }F,Z^s/G(\epsilon)^{ad})$.

\begin{thm}\label{thm-rep}
The two functors $\Phi_1$ and $\Phi_2$ are naturally equivalent.
\end{thm}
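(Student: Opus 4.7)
The plan is to construct mutually inverse natural transformations $T : \Phi_2 \longrightarrow \Phi_1$ and $T' : \Phi_1 \longrightarrow \Phi_2$, with the real content being the identification of the stability condition on $Z^s$ with the parabolic condition of Section \ref{sec-stable-triple}. For $T$, I would simply pull back the universal stable triple $(\mathcal{B}, r, x_1, \ldots, x_g)$ from Example \ref{e:triple} along an $F$-point $\spec F \longrightarrow Z^s/G(\epsilon)^{ad}$; naturality is automatic from functoriality of pullback, and stability of the pullback will follow from the comparison below.

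For $T'$, starting from $(A, \sigma, x_1, \ldots, x_g) \in \Phi_1(F)$, I would first invoke Appendix \ref{sec:csa-inv} to identify $G(\epsilon)^{ad}$ with the automorphism group scheme of the split pair $(\End_k(k^\epsilon), r_0)$, where $r_0$ is the standard symplectic or orthogonal involution. Fppf descent then exhibits $(A, \sigma)$ as a $G(\epsilon)^{ad}$-torsor
$$
P \,=\, \isom\bigl((\End_k(k^\epsilon)\otimes_k F,\, r_0),\; (A, \sigma)\bigr) \longrightarrow \spec F,
$$
and the twisted form of $\mathfrak{g}(\epsilon)$ under the adjoint action is canonically the space $A^{\sigma=-1}$ of $\sigma$-antisymmetric elements of $A$. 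The $g$-tuple $(x_1, \ldots, x_g) \in (A^{\sigma=-1})^{\times g}$ therefore corresponds to a $G(\epsilon)^{ad}$-equivariant morphism $\tilde{x} : P \longrightarrow Z = \mathfrak{g}(\epsilon)^{\times g}$, and the pair $(P, \tilde{x})$ is the data of a morphism $\spec F \longrightarrow [Z/G(\epsilon)^{ad}]$; set $T'(A, \sigma, x_\bullet) := (P, \tilde{x})$.

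The main obstacle, on which both the well-definedness of $T$ and the well-definedness of $T'$ hinge, is showing that $\tilde{x}$ factors through $Z^s$ if and only if $(A, \sigma, x_\bullet)$ is stable. After base change to a finite Galois extension $K/F$ that trivializes $P$ via some $\phi : A_K \tildearrow \End_K(K^\epsilon)$ sending $\sigma$ to $r_0$, each $x_{i,K}$ becomes a matrix $X_i \in \mathfrak{g}(\epsilon)(K)$, and the $K$-point of $P$ determined by $\phi$ maps to $(X_1, \ldots, X_g) \in Z(K)$. By the characterization of $Z^s$ recalled in Remark \ref{rem-Z} (that is, \cite[Proposition 4.2]{CDL10}), this tuple lies in $Z^s(K)$ iff the $\Lambda$-module $K^\epsilon$ with $z_i$ acting as $X_i$ admits no nonzero totally isotropic submodule for $Q_0$; transporting this through $\phi$ gives precisely the statement that no nonzero totally isotropic subspace of $K^\epsilon$ is invariant under the subalgebra of $A_K$ generated by the $x_{i,K}$, which by definition says the subalgebra of $A$ generated by the $x_i$ is not contained in a parabolic subalgebra. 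Independence of the chosen splitting $\phi$ is guaranteed by the proposition following the definition of parabolic.

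With this equivalence $T$ and $T'$ are well-defined natural transformations, and the composites $T' \circ T$ and $T \circ T'$ are identities by inspection: $T \circ T'$ because twisting the split model by $P$ and pulling back tautologically recovers $(A, \sigma)$ together with the original antisymmetric elements $x_i$; and $T' \circ T$ because the universal triple on $[Z^s/G(\epsilon)^{ad}]$ was constructed in Example \ref{e:triple} precisely as the descent of the tautological data on $Z^s$, so its associated torsor-plus-section is tautologically the given $F$-point. Naturality in $F$ follows from the naturality of pullback and of fppf descent.
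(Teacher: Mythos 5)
Your overall route is the paper's: one direction is pullback of the universal triple $(\mathcal{B},r,x_\bullet)$ from Example \ref{e:triple}, the other passes to a splitting field, uses \cite[Proposition 4.2]{CDL10} to translate ``no nontrivial totally isotropic $\Lambda\otimes K$-submodule'' into membership in $Z^s$, and uses the independence-of-splitting lemma; the only real difference is that you package the descent in torsor language rather than the paper's explicit Galois $1$-cocycle. The genuine gap is in the last step, where you dispose of both composites ``by inspection.'' The functor $\Phi_2$ is $\mbox{Mor}(\spec F, Z^s/G(\epsilon)^{ad})$ for the GIT quotient, not for the quotient stack $[Z^s/G(\epsilon)^{ad}]$, and these do not have the same field-valued points: $G(\epsilon)^{ad}$ does not act freely on all of $Z^s$ (a stable point can be an orthogonal direct sum of non-isomorphic anisotropic submodules, whose block-sign automorphisms give stabilizers strictly larger than the centre --- this is why the paper claims the $\G/\mu_2$-bundle structure only \emph{generically} in Example \ref{ex-slice}). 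Your $T'$ produces a stack point $(P,\tilde x)$ and you implicitly compose with the map to the GIT quotient, which is fine; but for $T'\circ T={\rm Id}$ you start from an \emph{arbitrary} scheme morphism $\psi:\spec F\to Z^s/G(\epsilon)^{ad}$, and nothing yet guarantees that $\psi$ arises from a torsor with an equivariant map to $Z^s$, so ``its associated torsor-plus-section is tautologically the given $F$-point'' is not available. This is exactly where the paper works: it proves the claim that any field-valued point of $Z^s/G(\epsilon)^{ad}$ pulling $\mathcal{B}$ back to a \emph{split} stable CSA with involution lifts to $Z^s$ (by passing to the algebraic closure, where stable orbits are closed and points of the quotient lift), and it verifies Galois-equivariance of $\pi\circ\phi$ against an explicit cocycle, plus independence of the chosen splitting, before concluding $\beta\circ\alpha={\rm Id}$. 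You need this lifting/comparison statement (or an argument that equality of the two morphisms to the separated scheme $Z^s/G(\epsilon)^{ad}$ may be checked after base change to $\overline{F}$, where both lift to $Z^s$ and differ by an element of $G(\epsilon)^{ad}(\overline F)$) to close the proof.

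A smaller point you should also address, since your $T'$ leans entirely on the torsor identification: $G(\epsilon)^{ad}$ is the automorphism group scheme of the split pair $(\End_k(k^\epsilon),r_0)$ only in the symplectic case. In the even orthogonal case that automorphism group is ${\rm PGO}(\epsilon)$, of which ${\rm PSO}(\epsilon)$ is merely the identity component, so fppf descent exhibits $(A,\sigma)$ a priori as a ${\rm PGO}(\epsilon)$-torsor, not a $G(\epsilon)^{ad}$-torsor. The paper's cocycle step glosses over the same wrinkle, but in your formulation the torsor $P=\isom\bigl((\End_k(k^\epsilon)\otimes_k F, r_0),(A,\sigma)\bigr)$ is the backbone of the construction, so you must either reduce its structure group or explain why the discrepancy is harmless here.
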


\begin{proof}
There is a natural transformation $\alpha: \Phi_2\longrightarrow \Phi_1$ that 
takes any
 ${\rm Spec}\,F \longrightarrow Z^s/G(\epsilon)^{ad}$ to the pullback of 
$\mathcal{B}$ to 
$\mbox{Spec }F$ via this morphism. We shall define another natural 
transformation $\beta: \Phi_1 \longrightarrow \Phi_2$ and prove that $\alpha$ 
and $\beta$ are inverses to each other.

Let $(A, \sigma, x_i)$ be a stable CSA with involution over the field $F$.
 Choose a finite Galois extension $K/F$ splitting $(A,\sigma,x_i)$. Choose an
isomorphism $A_K \tildearrow \mbox{End}(K^{2m},Q_0)$. This gives a $\Lambda
\otimes K$-module structure on $K^{2m}$; and from the definition of a stable CSA
with involution, we have that this module has no nontrivial isotropic
submodules. Hence by \cite[Proposition 4.2]{CDL10}, we obtain a map $\phi:
{\rm Spec}\, K \longrightarrow Z^s$. Consider the composition $\pi \circ \phi: 
{\rm Spec}\, K
\longrightarrow Z^s \longrightarrow Z^s/G(\epsilon)^{ad}$. We want to show that 
for every 
$\tau \in
\mbox{Gal}(K/F)$, the diagram

\begin{equation}\label{eqn-1}
 \begin{CD}
  \mbox{Spec }K		@>\pi \circ \phi>>	Z^s/G(\epsilon)^{ad} \\
  @VV\tau V					@|	   \\
  \mbox{Spec }K		@>\pi \circ \phi>>	Z^s/G(\epsilon)^{ad}
 \end{CD}
\end{equation}
commutes and hence the map 
$\pi \circ \phi: \mbox{Spec }K \longrightarrow Z^s/G(\epsilon)^{ad}$ descends to 
give a map
$\psi:\mbox{Spec }F \longrightarrow Z^s/G(\epsilon)^{ad}$.

Let $\{ A_{\tau} \}$ be a 1-cocycle representing the class 
$$[(A,\sigma)] \in H^1(\mbox{Gal}(K/F),G(\epsilon)^{ad}(K)).$$
 Recall that this
defines an action of $\mbox{Gal}(K/F)$ on $A_K$, and $A \subseteq A_K$ consists
of the invariant elements. Since the elements $x_i$ are in $A$, they are
invariant under the $\mbox{Gal}(K/F)$-action. Hence, we have
\[
 x_i = A_{\tau} (x_i)^{\tau} (A_{\tau})^{-1}  
\]
for all $\tau \in \mbox{Gal}(K/F)$. Translating this into a commutative diagram, we get
\begin{equation}\label{eqn-3}
 \begin{CD}
  \mbox{Spec }K 	@>\phi>> 	Z^s \\
  @V\tau VV					@VV A_{\tau} (-) A_{\tau}^{-1}V \\
  \mbox{Spec }K		@>\phi>>	Z^s,
 \end{CD}
\end{equation}
and composing with $\pi:Z^s \longrightarrow Z^s/G(\epsilon)^{ad}$ gives us 
exactly the diagram in
\eqref{eqn-1}. Hence we obtain the desired map $\psi:\mbox{Spec 
}F\longrightarrow Z^s/G(\epsilon)^{ad}$.

We are now going to prove that $\psi$ is 
independent of the choice of the finite Galois extension $K/F$ splitting 
$(A,\sigma,x_i)$ as well as the choice of the isomorphism $(A,\sigma)_K 
\tildearrow \mbox{End}(K^{2m},Q_0)$. Let 
$K_1/F$ and $K_2/F$ be two such extensions, and let $K/F$ be a finite Galois 
extension containing $K_1$ and $K_2$. Choosing two isomorphisms $A_{K_i} 
\tildearrow \mbox{End}(K_{i}^{2m},Q_0)$, $i=1,2$, and extending them to $K$,
we obtain two maps $\phi_i: \mbox{Spec }K_i\longrightarrow Z^s$ whose 
compositions with 
$j_i: \mbox{Spec }K\longrightarrow \mbox{Spec }K_i$ differ by an element of 
$G(\epsilon)^{ad}(K)$. Hence we have that $\pi \circ \phi_1 \circ j_1 = \pi 
\circ \phi_2 \circ j_2$. 

Now $\pi \circ \phi_1$ descends to $\psi_1: \mbox{Spec }F\longrightarrow 
Z^s/G(\epsilon)^{ad}$ 
as proved above. Hence $\pi \circ \phi_1 \circ j_1$ also descends to $\psi_1$. 
Similarly, $\pi \circ \phi_2$ descends to $\psi_2: {\rm Spec}\,F\longrightarrow 
Z^s/G(\epsilon)^{ad}$, and hence $\pi \circ \phi_2 \circ j_2$ descends to 
$\psi_2$. Since $\pi \circ \phi_1 \circ j_1 = \pi \circ \phi_2 \circ j_2$, it follows that $\psi_1 = \psi_2$. This finishes the construction of $\beta$.

It remains to prove that $\alpha$ and $\beta$ are inverses to each other. To 
prove that $\alpha \circ \beta={\rm Id}$, consider a stable CSA with involution 
$(A,\sigma,x_i)$ over a field $F$, and choose a finite Galois extension $K/F$ 
splitting it. We obtain a morphism $\phi: {\rm Spec }\,K \longrightarrow Z^s$ 
whose 
composition with $\pi$ descends to $\psi:\mbox{Spec }F \longrightarrow 
Z^s/G(\epsilon)^{ad}$. 
We note that $\pi \circ \phi$ pulls back $\mathcal{B}$ to $(A,\sigma,x_i)_K$.

Consider the class $[(A,\sigma)] \in H^1(\mbox{Gal}(K/F),G(\epsilon)^{ad}(K))$. 
Let $\{ A_{\tau} \}$ be a 1-cocycle representing this class. Since the $x_i$ 
come from elements in $A$, we have $x_i=A_{\tau} (x_i)^{\tau} A_{\tau}^{-1}$; in 
other words, we have the commutative diagram in \eqref{eqn-3}. Hence the action 
of 
$\mbox{Gal}(K/F)$ on $(A,\sigma,x_i)_K$ is the same as the action defined by the 
1-cocycle $\{ A_{\tau} \}$. This implies that $\mathcal{B}$ pulls back to 
$(A,\sigma,x_i)$ under $\psi:\mbox{Spec }F \longrightarrow 
Z^s/G(\epsilon)^{ad}$, and hence that $\alpha \circ \beta={\rm Id}$.

To prove that $\beta \circ \alpha={\rm Id}$, take $\psi:\mbox{Spec }F 
\longrightarrow 
Z^s/G(\epsilon)^{ad}$. Consider the stable CSA with involution obtained by 
pulling $\mathcal{B}$ to $\mbox{Spec }F$ via $\psi$. Take a finite Galois 
extension $K/F$ splitting $\psi^* \mathcal{B}$, and obtain a morphism 
$\phi:\mbox{Spec }K \longrightarrow Z^s$. We then need to prove that the diagram
\begin{equation*}
 \begin{CD}
  \mbox{Spec }K		@>\phi>>	Z^s \\
  @VVV					@VV\pi V	   \\
  \mbox{Spec }F		@>\psi>>	Z^s/G(\epsilon)^{ad}
 \end{CD}
\end{equation*}
commutes. (Note that there is a slight ambiguity in the choice of $\phi$ here, 
due to the fact that one must choose an isomorphism $(\psi^* \mathcal{B})_K 
\longrightarrow 
\mbox{End}(K^{2m},Q_0)$. However, since such choices only affect $\phi$ up to conjugation by an element of $G(\epsilon)^{ad}$, this will not be a problem.)

This follows from the following claim: Any morphism $f:\mbox{Spec }M 
\longrightarrow 
Z^s/G(\epsilon)^{ad}$ that pulls $\mathcal{B}$ back to a \emph{split} stable CSA 
with involution, lifts to a morphism $g:\mbox{Spec }M \longrightarrow Z^s$.

\emph{Proof of the claim: } Consider a morphism $f:\mbox{Spec }M \longrightarrow 
Z^s/G(\epsilon)^{ad}$ that pulls $\mathcal{B}$ back to a split stable CSA with 
involution, giving a morphism $g:\mbox{Spec }M \longrightarrow Z^s$ that pulls 
$\mathcal{A}$ 
back to the stable CSA with involution on $\mbox{Spec }M$. Let $\overline{M}$ 
denote the algebraic closure of $M$. The composition of $f$ with $\mbox{Spec 
}\overline{M}\longrightarrow \mbox{Spec }M$ lifts to $Z^s$, i.e., the outer 
square in the diagram
\[
 \xymatrix{
 \mbox{Spec }\overline{M} \ar[d]_i \ar[r]^{\overline{f}} & Z^s \ar[d]^{\pi} \\
 \mbox{Spec }M \ar[ur]^g \ar[r]^f & Z^s/G(\epsilon)^{ad}
 }
\]
commutes. Now it is easily checked that $g \circ i$ and $\overline{f}$ pull the stable CSA with involution $\mathcal{A}$ back to isomorphic stable CSAs with involution over $\overline{M}$. Hence $g \circ i = \overline{f}$; and this implies that $f=\pi \circ g$, finishing the proof of the claim and the proof of the theorem.
\end{proof}

\section{Construction of  Stable CSAs with
involution}\label{sec-construction}

Throughout this section, we will write $\epsilon=2^\alpha s$. We also remind
the 
reader that $g \geq 2$, where $g$ was defined in Remark \ref{rem-Z}. 

Given a field $L$ and two elements $\alpha, \beta\in L\setminus L^{\times 2}$ we
denote by
$(\alpha,\beta)$ the quaternion algebra formed by taking square roots of 
$\alpha$ and $\beta$. Concretely, this is the subalgebra of
$$ 
M_2(L(\sqrt{\alpha}, \sqrt{\beta}))
$$
generated by the matrices 
$$
\left(
\begin{array}{cc}
 \sqrt{\alpha} & 0 \\
  0 & -\sqrt{\alpha}
\end{array}
\right) \quad
\left(
\begin{array}{cc}
  0 & 1 \\
  \beta & 0
\end{array}
\right).
$$
The reader is referred to Appendix \ref{sec:csa-inv}
for more details.

Let 
$F=k(x_1, \cdots,x_{\alpha},y_1,\cdots,y_{\alpha})$ and 
$K=k(\sqrt{x_1},\cdots,\sqrt{x_{\alpha}},y_1,\cdots,y_{\alpha})$.

We let 
$$
D = (x_1,y_1) \otimes \cdots \otimes (x_{\alpha},y_{\alpha})
$$

\begin{thm} \label{t:amitsur-index}
 The central simple algebra $D$
over $F$ is a division algebra and hence has index $2^\alpha$.
\end{thm}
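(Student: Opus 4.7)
The plan is to prove this by induction on $\alpha$, using the $x_\alpha$-adic completion of $F$ at each stage and the residue decomposition of its Brauer group to read off the index. Since $D$ has degree $(2^\alpha)^2$ over $F$, showing that the index of $D$ equals $2^\alpha$ is equivalent to showing that $D$ is a division algebra; the latter is what I aim for.

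\textbf{Base case.} For $\alpha=1$, I pass to the completion $\widehat F = k(y_1)((x_1))$ of $F=k(x_1,y_1)$ at the $x_1$-adic valuation. In $\widehat F$, $x_1$ is a uniformizer and $y_1$ is a unit whose image in the residue field $k(y_1)$ is a non-square, since $y_1$ is transcendental over $k$. The symbol algebra $(x_1,y_1)_{\widehat F}$ therefore has nontrivial residue in $k(y_1)^\times/k(y_1)^{\times 2}$, so it is nonsplit; being of degree $2$ it is a division algebra, and hence so is $(x_1,y_1)$ over $F$.

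\textbf{Inductive step.} Assume $D_{\alpha-1}:=\bigotimes_{i=1}^{\alpha-1}(x_i,y_i)$ is a division algebra of index $2^{\alpha-1}$ over $F_{\alpha-1}:=k(x_1,y_1,\ldots,x_{\alpha-1},y_{\alpha-1})$. Let $\widehat F$ denote the completion of $F = F_{\alpha-1}(x_\alpha,y_\alpha)$ at the $x_\alpha$-adic valuation, with residue field $\kappa = F_{\alpha-1}(y_\alpha)$. In characteristic zero the standard residue decomposition gives
\[
\Br(\widehat F)[2] \,\cong\, \Br(\kappa)[2] \,\oplus\, \kappa^\times/\kappa^{\times 2}.
\]
Under this decomposition $[D_{\alpha-1}\otimes\widehat F]$ is unramified and corresponds to $[D_{\alpha-1}\otimes_{F_{\alpha-1}}\kappa]\in\Br(\kappa)[2]$, while $[(x_\alpha,y_\alpha)_{\widehat F}]$ has residue $y_\alpha\in\kappa^\times/\kappa^{\times 2}$.

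\textbf{Core step.} The main obstacle is converting this decomposition into an index computation. For a Brauer class in $\Br(\widehat F)[2]$ whose unramified part has residue division algebra $D_0/\kappa$ and whose ramified part cuts out the quadratic extension $L/\kappa$, the Jacob--Wadsworth index formula gives index $[L:\kappa]\cdot\mathrm{ind}(D_0\otimes_\kappa L)$. In our setting $L = \kappa(\sqrt{y_\alpha}) = F_{\alpha-1}(\sqrt{y_\alpha})$, which is a purely transcendental extension of $F_{\alpha-1}$ of transcendence degree one (with transcendence basis $\sqrt{y_\alpha}$). Since purely transcendental extensions preserve the index of a division algebra, $D_{\alpha-1}\otimes L$ remains division of index $2^{\alpha-1}$. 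Hence $\mathrm{ind}(D\otimes_F\widehat F) = 2\cdot 2^{\alpha-1} = 2^\alpha$, which forces $D$ itself to be a division algebra of index $2^\alpha$ over $F$, completing the induction.
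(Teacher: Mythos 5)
Your argument is correct, but it is a genuinely different route from the paper's: the paper disposes of this statement by citing Amitsur (\cite[Theorem 3]{sA72}), whereas you give an inductive, valuation-theoretic proof. The three inputs you use are all standard and correctly applied: Witt's residue decomposition $\Br(\widehat F)[2]\cong\Br(\kappa)[2]\oplus\kappa^\times/\kappa^{\times 2}$ for a complete discretely valued field with residue characteristic $\neq 2$; the tame index formula $\mathrm{ind}(\beta)=[L:\kappa]\cdot\mathrm{ind}(\beta_0\otimes_\kappa L)$ (Nakayama--Witt, in the generality you need; Jacob--Wadsworth for general Henselian valuations), applied with unramified part $[D_{\alpha-1}\otimes\kappa]$ and ramified part $(x_\alpha,y_\alpha)$ cutting out $L=\kappa(\sqrt{y_\alpha})$; and invariance of the index under the purely transcendental extension $F_{\alpha-1}\subset F_{\alpha-1}(\sqrt{y_\alpha})=L$. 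The base case is also fine, since $y_1$ is a nonsquare in $k(y_1)$, so the symbol is ramified at $x_1$. Since the index can only drop under the extension $F\subset\widehat F$, the equality $\mathrm{ind}(D\otimes\widehat F)=2^\alpha=\deg D$ forces $D$ to be division over $F$. What the comparison buys: the paper's citation is shorter and hides all ramification theory in Amitsur's result, while your proof is self-contained modulo the index formula and in fact proves slightly more --- that $D$ stays division after completion in the $x_\alpha$-variables --- which is in the same spirit as (though not a substitute for) the later passage to $\widehat F=k((x_1,y_1,\cdots,x_\alpha,y_\alpha))$ in Proposition \ref{prop-factor}. Two cosmetic points: $D$ has degree $2^\alpha$ and dimension $(2^\alpha)^2$ (the paper itself commits the same abuse), and you should say explicitly that the decomposition you invoke is the one determined by the uniformizer $x_\alpha$, so that $[D_{\alpha-1}\otimes\widehat F]$ and $(x_\alpha,y_\alpha)$ are exactly the unramified and ramified components of $[D\otimes\widehat F]$ to which the index formula is applied.
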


\begin{proof}
 See \cite[Theorem 3]{sA72}.
\end{proof}

\subsection{The symplectic case with $\alpha$ odd} 

Recall from the appendix B that the quaternion algebras $(x_i,y_i)$ have a
natural symplectic involution that we
denoted by $\sigma_i$.

Suppose that $\alpha$ is odd and recall that $\epsilon=2^\alpha s$.
The central simple algebra $D\otimes M_s(F)$
has a symplectic involution
$$
\sigma_1\otimes\sigma_2\otimes\ldots\sigma_\alpha\otimes t
$$
where $t$ is the transpose involution on the matrix algebra. (See \cite[Proposition 2.23]{KMRT98}.)

\begin{prop}\label{prop-sp1}
Suppose that $\alpha$ is odd.
Denote by $i_n$ (respectively, $j_n$) the square roots of
$x_n$ (respectively, $y_n$) in the algebra 
$$
D \otimes M_s(F).
$$
If
\begin{eqnarray*}
 A & = & i_1\otimes i_2\otimes\cdots\otimes i_\alpha\otimes{\rm 
diag}(1,2,\cdots,s) \\
 B & = & j_1\otimes j_2\otimes\cdots\otimes j_\alpha\otimes I_s
\end{eqnarray*}
then the collection of elements
$\lambda_1 =A$, $\lambda_2=x_3=\ldots = \lambda_g=B$ gives $D$ the structure of
a stable
central simple algebra with symplectic involution.
\end{prop}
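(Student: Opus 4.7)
The plan is to first verify that $\sigma(A) = -A$ and $\sigma(B) = -B$ for $\sigma := \sigma_1\otimes\cdots\otimes\sigma_\alpha\otimes t$, and then to show that the subalgebra $\Lambda := F\langle A,B\rangle$ (which is what the $\lambda_i$ generate) is not contained in any parabolic subalgebra of $D\otimes M_s(F)$. The skew-symmetry is a direct calculation: each $\sigma_n$ is the canonical symplectic involution on $(x_n,y_n)$ with $\sigma_n(i_n) = -i_n$ and $\sigma_n(j_n) = -j_n$, while $t$ fixes $\diag(1,\ldots,s)$ and $I_s$, so $\sigma(A) = (-1)^\alpha A = -A$ and similarly for $B$, using that $\alpha$ is odd.

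For the main assertion, I would pass to the splitting field $K := F(\sqrt{x_1},\ldots,\sqrt{x_\alpha})$, obtain an isomorphism $(D\otimes M_s(F))\otimes_F K \cong \End_K(K^{2m},Q_0)$, and show that $\Lambda_K$ leaves no nontrivial totally isotropic $K$-subspace of $K^{2m}$ invariant. Over $K$, $A$ has the $2s$ distinct eigenvalues $\pm i\sqrt{x_1\cdots x_\alpha}$ (for $i=1,\ldots,s$), each of multiplicity $2^{\alpha-1}$; let $U_i\subseteq K^{2m}$ be the sum of the eigenspaces with eigenvalues $\pm i\sqrt{x_1\cdots x_\alpha}$. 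A Vandermonde argument on powers of $A^2 = (x_1\cdots x_\alpha)\,\diag(1,4,\ldots,s^2)$ expresses the projectors onto each $U_i$ as polynomials in $A^2$, so these projectors lie in $\Lambda_K$ and $\Lambda_K = \bigoplus_i \Lambda_K|_{U_i}$. On each $U_i$ the restrictions satisfy $(A|_{U_i})^2 = i^2 x_1\cdots x_\alpha$, $(B|_{U_i})^2 = y_1\cdots y_\alpha$, and $A|_{U_i} B|_{U_i} = -B|_{U_i} A|_{U_i}$ (using $\alpha$ odd), so they generate a quaternion algebra $(i^2 x_1\cdots x_\alpha,\, y_1\cdots y_\alpha)_K$, which splits since $\sqrt{x_1\cdots x_\alpha}\in K$. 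Hence $\Lambda_K|_{U_i}\cong M_2(K)$ acts on the $2^\alpha$-dimensional space $U_i$ as $M_2\otimes\mathrm{Id}$ via an identification $U_i\cong K^2\otimes M_i$ with $\dim_K M_i = 2^{\alpha-1}$.

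A direct check gives $Q_0(U_i,U_j) = 0$ for $i\neq j$ (the $M_s$-factor contributes $\delta_{ij}$), so any $\Lambda_K$-invariant totally isotropic subspace decomposes as $W = \bigoplus_i W_i$ with $W_i\subseteq U_i$. The $\Lambda_K|_{U_i}$-invariant subspaces of $U_i = K^2\otimes M_i$ are exactly those of the form $K^2\otimes N_i$ for $N_i\subseteq M_i$. Moreover, since the $\mathfrak{sl}_2$-invariants of $V^*\otimes V^*$ (for $V = K^2$) form a one-dimensional space spanned by the standard symplectic form $\omega$, any $\mathfrak{sl}_2\otimes\mathrm{Id}$-invariant bilinear form on $K^2\otimes M_i$ factors as $\omega\otimes q$ for some bilinear form $q$ on $M_i$; hence the nondegenerate alternating form $Q_0|_{U_i}$ equals $\omega\otimes q_i$ for a uniquely determined nondegenerate symmetric $q_i$. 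Consequently $K^2\otimes N_i$ is totally isotropic if and only if $N_i$ is totally isotropic for $q_i$.

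Evaluating the symmetric pairing $(u,v)\mapsto Q_0(u,Bv)$ on the $A$-eigenbasis inside the $+i\sqrt{x_1\cdots x_\alpha}$-eigenspace of $A|_{U_i}$ identifies $q_i$, up to similarity, with the diagonal form
\[
 \Bigl\langle\, \prod_{n\in T} y_n \,:\, T\subseteq\{1,\ldots,\alpha\},\ |T|\text{ odd}\,\Bigr\rangle,
\]
which is an orthogonal summand of the $\alpha$-fold Pfister form $\langle\!\langle y_1,\ldots,y_\alpha\rangle\!\rangle$. Since $y_1,\ldots,y_\alpha$ are algebraically independent over $k$, this generic Pfister form is anisotropic over $k(y_1,\ldots,y_\alpha)$ (a standard induction on $\alpha$ via residues at $(y_\alpha)$), and anisotropy is preserved under the purely transcendental extension $K/k(y_1,\ldots,y_\alpha)$ obtained by adjoining the independent transcendentals $\sqrt{x_1},\ldots,\sqrt{x_\alpha}$. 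Hence $q_i$ is anisotropic, every $N_i$ is zero, and no nontrivial invariant totally isotropic subspace exists. The main obstacle will be the explicit identification of $q_i$ with a subform of the generic Pfister form, which requires careful bookkeeping in the tensor decomposition $U_i\cong K^2\otimes M_i$ on the $A$-eigenbasis; the anisotropy argument and the reduction to each $U_i$ are then routine.
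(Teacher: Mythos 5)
Your proposal is correct, and it reaches the conclusion by a more structural route than the paper's own argument. The paper works directly inside an arbitrary nonzero $\Lambda\otimes K$-submodule $M$: it extracts from $M$ an eigenvector $v$ of $A$ (the eigenprojectors being polynomials in $A$, exactly your Vandermonde step), computes $Q(v,Bv)$ on the tensor eigenbasis, and recognizes it as a value of a diagonal subform of the generic Pfister form $\langle\!\langle y_1,\ldots,y_\alpha\rangle\!\rangle$, hence nonzero by Lemma \ref{lem:pfister}; since $Bv\in M$, the submodule $M$ is not totally isotropic. You instead split $K^{\epsilon}=\bigoplus_i U_i$ along the eigenspaces of $A^2$, identify the image of the generated algebra on each $U_i$ as a split quaternion algebra acting through $U_i\cong K^2\otimes M_i$, invoke $\mathfrak{sl}_2$-invariant theory to write $Q|_{U_i}=\omega\otimes q_i$, and reduce stability to anisotropy of the explicit $2^{\alpha-1}$-dimensional symmetric form $q_i$, again an orthogonal summand (up to a nonzero scalar) of the same Pfister form. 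So the kernel of the argument is the same in both proofs --- pairing an $A$-eigenvector with its $B$-image and appealing to anisotropy of the generic Pfister form, preserved under the purely transcendental extension to $K$ --- but your packaging buys a complete identification of the restricted form $q_i$ and makes precise the ``standard arguments'' about projections that the paper only gestures at, at the cost of extra Morita/invariant-theory bookkeeping (the identification $U_i\cong K^2\otimes M_i$ and the uniqueness of the invariant bilinear form on the standard $\mathfrak{sl}_2$-module), which the paper's hands-on computation avoids. Two harmless caveats: with the matrix model of $j_n$ used in the paper's appendix the diagonal entries of $q_i$ come out as the monomials $\prod_{n\in T}y_n$ with $|T|$ of the opposite parity to the one you state, and $q_i$ is determined only up to a nonzero scalar and signs; neither affects the anisotropy conclusion, since $k$ is algebraically closed and scaling preserves anisotropy.
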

\begin{proof}
In what follows, we will think of $D$ as a subalgebra of a matrix algebra over
$K$ in the usual way.

 It is easily checked that $\sigma(A)=-A$ and
$\sigma(B)=-B$.

Now consider standard bases $\{e_1,e_2\}, \cdots, 
\{e_{2\alpha-1},e_{2\alpha}\}, \{f_1,\cdots,f_s\}$ of the vector spaces $K^2$ 
and $K^s$. With respect to these standard bases, the eigenvalues of $A$ are 
$$\pm \sqrt{x_1 \cdots x_{\alpha}}, \cdots, \pm s\sqrt{x_1 \cdots 
x_{\alpha}}\, .$$
Consider the eigenvalue $\sqrt{x_1 \cdots x_{\alpha}}$. (The proof for the other
eigenvalues is similar and shall be omitted.) The eigenspace is spanned by the
vectors $e_{i_1} \otimes \cdots e_{i_{\alpha}} \otimes f_1$, where $i_k$ is
either $2k-1$ or $2k$ and an even number of the $i_k$'s are even. For
simplicity, we denote $e_{i_1} \otimes \cdots e_{i_{\alpha}} \otimes f_1$ by
$e_{i_1,\cdots,i_{\alpha}}$.

Standard arguments show that if $v\in M$ has a non-zero projection onto an
eigenspace then $M$ must contain a non-zero eigenvector for that eigenspace.

Consider a nonzero $\Lambda \otimes K$-submodule $M$ of $K^{\epsilon}$. Take a 
nonzero vector $$v=\sum \lambda_{i_1,\cdots,i_{\alpha}} 
e_{i_1,\cdots,i_{\alpha}} \in M\, , ~\, \lambda_{i_1,\cdots,i_{\alpha}} \in K$$ 
that 
is an eigenvector for $\sqrt{x_1 \cdots x_{\alpha}}$. Then we have
\begin{equation*}
 Bv=\sum \lambda_{i_1,\cdots,i_{\alpha}} y_1^{b_1} \cdots 
y_{\alpha}^{b_{\alpha}} e_{\overline{i_1},\cdots,\overline{i_{\alpha}}}\, ,
\end{equation*}
where $\overline{i_k}=2k-1$ if $i_k=2k$ and $\overline{i_k}=2k$ if $i_k=2k-1$.
Also, $b_k=0$ if $i_k$ is odd and $b_k=1$ is $i_k$ is even.\footnotetext[1]{We
keep this notation throughout the rest of the section.} Hence an even number of
the $b_k$'s are 1.

The symplectic involution $\sigma$ is adjoint with respect to the symplectic 
form given by $Q=\Sigma \otimes \cdots \Sigma \otimes I_s$. We claim that
$Q(v,Bv) \neq 0$, which proves that the $\Lambda \otimes K$-submodule $M$ of
$K^{\epsilon}$ is not isotropic, and that we have a stable CSA with involution
$(D \otimes M_s(F),\sigma,\{x_1,\cdots,x_g\})$.

For each term $\lambda_{i_1,\cdots,i_{\alpha}} e_{i_1,\cdots,i_{\alpha}}$ in 
$v$, the only term in $Bv$ for which $Q(-,-)$ is nonzero is 
$\lambda_{i_1,\cdots,i_{\alpha}} y_1^{b_1} \cdots y_{\alpha}^{b_{\alpha}} 
e_{\overline{i_1},\cdots,\overline{i_{\alpha}}}$. Hence, we have
\begin{equation*}
 Q(v,Bv)=\sum \lambda_{i_1,\cdots,i_{\alpha}}^2 y_1^{b_1} \cdots 
y_{\alpha}^{b_{\alpha}}.
\end{equation*}
Assume that $Q(v,Bv)=0$. Then we have the equation
\begin{equation*}
 \sum \lambda_{i_1,\cdots,i_{\alpha}}^2 y_1^{b_1} \cdots 
y_{\alpha}^{b_{\alpha}}=0.
\end{equation*}
Using Lemma \ref{lem:pfister} we see that this is a contradiction, hence
finishing the proof.
\end{proof}

\subsection{The symplectic case with $\alpha$ even} 

The quaternion algebra $(x_\alpha,y_\alpha)$ has an orthogonal 
involution $\tau$, described in the appendix. The involution
$$
\sigma_1\otimes\sigma_2\otimes\cdots\otimes\sigma_{\alpha - 1}\otimes\tau\otimes 
t
$$
is a symplectic involution on $D\otimes M_s(F)$. (See \cite[Proposition 2.23]{KMRT98}.)

\begin{prop}\label{prop-sp2}
Suppose that $\alpha$ is even.
Denote by $i_n$ (respectively, $j_n$) the square roots of
$x_n$ (respectively, $y_n$) in the algebra 
$$
D \otimes M_s(F).
$$
If
\begin{eqnarray*}
 A & = & i_1\otimes i_2\otimes\cdots\otimes i_\alpha\otimes{\rm 
diag}(1,2,\cdots,s) \\
 B & = & j_1\otimes j_2\otimes\cdots\otimes j_{\alpha-1}\otimes 1 \otimes I_s
\end{eqnarray*}
then the collection of elements
$\lambda_1 =A$, $\lambda_2=\lambda_3=\ldots = \lambda_g=B$ gives $D$ the
structure of a stable
central simple algebra with symplectic involution.

\end{prop}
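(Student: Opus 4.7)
The plan is to parallel Proposition \ref{prop-sp1}, with two adjustments dictated by $\alpha$ being even: the $\alpha$-th quaternion slot carries the orthogonal involution $\tau$ in place of a symplectic $\sigma_\alpha$, and correspondingly the $\alpha$-th slot of $B$ is $1$ rather than $j_\alpha$.

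First I would verify the skew-symmetry $\sigma(A)=-A$ and $\sigma(B)=-B$ using the explicit action on generators: each symplectic $\sigma_k$ negates both $i_k$ and $j_k$ for $k<\alpha$; the orthogonal $\tau$ fixes $i_\alpha$ and negates $j_\alpha$ (per the conventions of Appendix \ref{sec:csa-inv}); and the transpose $t$ fixes $\mathrm{diag}(1,\ldots,s)$ and $I_s$. This yields $\sigma(A)=(-1)^{\alpha-1}A=-A$ and $\sigma(B)=(-1)^{\alpha-1}B=-B$, both using that $\alpha-1$ is odd.

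For stability, I would pass to the splitting field $K$ and diagonalize $A$ in the standard bases $\{e_{2k-1},e_{2k}\}$ of each quaternion slot together with $\{f_1,\ldots,f_s\}$ of the matrix slot. The spectral structure of $A$ is identical to that in Proposition \ref{prop-sp1}: eigenvalues $\pm r\sqrt{x_1\cdots x_\alpha}$ with eigenvectors indexed by tuples $(i_1,\ldots,i_\alpha,r)$ satisfying a parity condition on the number of even $i_k$. Given a nonzero $\Lambda\otimes K$-submodule $M$, projecting onto an eigenspace of $A$ produces a nonzero eigenvector $v=\sum\lambda_{i_1,\ldots,i_\alpha}e_{i_1,\ldots,i_\alpha,r}\in M$. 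Since $B$ acts as the identity on the $\alpha$-th slot,
\[
Bv\;=\;\sum\lambda_{i_1,\ldots,i_\alpha}\,y_1^{b_1}\cdots y_{\alpha-1}^{b_{\alpha-1}}\,e_{\overline{i_1},\ldots,\overline{i_{\alpha-1}},i_\alpha,r},
\]
so that no power of $y_\alpha$ is introduced by $B$ itself. The form $Q$ adjoint to $\sigma$ factors as $\Sigma^{\otimes(\alpha-1)}\otimes T\otimes I_s$, where $T$ is the symmetric form on $K^2$ adjoint to $\tau$; in standard coordinates the two pairings $T(e_1,e_1)$ and $T(e_2,e_2)$ are nonzero and lie in \emph{distinct} square classes of $K$, one being a unit and the other being $y_\alpha$ up to $K^{\times 2}$. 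Computing $Q(v,Bv)$ slot by slot shows that the $\alpha$-th slot injects a factor of $T(e_{i_\alpha},e_{i_\alpha})$ that, via the eigenspace parity condition, reintroduces $y_\alpha$ into the expression.

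Consequently $Q(v,Bv)=0$ becomes the assertion that the $\alpha$-fold Pfister form $\langle\!\langle y_1,\ldots,y_{\alpha-1},-y_\alpha\rangle\!\rangle$ represents zero nontrivially over $K$ with coefficients in $K^{\times 2}$. Since $K$ is obtained from $k(y_1,\ldots,y_\alpha)$ by adjoining square roots of the algebraically independent variables $x_1,\ldots,x_\alpha$, anisotropy of the Pfister form over $k(y_1,\ldots,y_\alpha)$ is preserved over $K$; applying Lemma \ref{lem:pfister} in this form forces all $\lambda_{i_1,\ldots,i_\alpha}=0$, contradicting $v\neq 0$. The main obstacle is bookkeeping the contribution of the orthogonal slot: rather than producing a clean monomial $y_\alpha^{b_\alpha}$ as in the odd case, the values of $T$ mix two distinct square classes with the parity of the eigenspace index, so one must repackage the sum as a \emph{twisted} $\alpha$-fold Pfister form (with the sign $-y_\alpha$) in order to invoke the anisotropy argument.
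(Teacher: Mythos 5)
Your proposal is correct and follows essentially the same route as the paper's proof: check $\sigma(A)=-A$ and $\sigma(B)=-B$ using that $\alpha-1$ is odd, extract an eigenvector $v$ of $A$ inside any nonzero $\Lambda\otimes K$-submodule, and show $Q(v,Bv)\neq 0$ because the orthogonal slot contributes two diagonal values whose ratio is $y_\alpha$ modulo squares, so that $Q(v,Bv)=0$ would make (a subform of) the Pfister form $\ll y_1,\ldots,y_\alpha \gg$ isotropic over $K$, contradicting Lemma \ref{lem:pfister} (your explicit remark that anisotropy persists under the purely transcendental passage to $K$ is a point the paper leaves implicit). One harmless slip: by the appendix's convention $\tau$ fixes both $i_\alpha$ and $j_\alpha$ and negates only $k_\alpha=i_\alpha j_\alpha$, not ``fixes $i_\alpha$ and negates $j_\alpha$''; since $j_\alpha$ appears in neither $A$ nor $B$, this does not affect your argument.
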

\begin{proof}
It is easily checked that $\sigma(A)=-A$ and
$\sigma(B)=-B$.

The eigenvalues of $A$ are as in the proof of Proposition \ref{prop-sp1}, and we keep 
the notation. Consider a nonzero $\Lambda \otimes K$-submodule $M$ of 
$K^{\epsilon}$. Take a nonzero vector $$v=\sum \lambda_{i_1,\cdots,i_{\alpha}} 
e_{i_1,\cdots,i_{\alpha}} \in M\, ,~\,\lambda_{i_1,\cdots,i_{\alpha}} \in K\, 
,$$ that
is an eigenvector for $\sqrt{x_1 \cdots x_{\alpha}}$. (The cases of the other
eigenvalues are similar and are left to the reader.) Then we have

\begin{equation*}
 Bv=\sum \lambda_{i_1,\cdots,i_{\alpha}} y_1^{b_1} \cdots 
y_{\alpha-1}^{b_{\alpha-1}} 
e_{\overline{i_1},\cdots,\overline{i_{\alpha-1}},i_{\alpha}}.
\end{equation*}

The symplectic involution $\sigma$ is adjoint with respect to the symplectic 
form given by $Q=\Sigma \otimes \cdots \Sigma \otimes T \otimes I_s$. We claim
that $Q(v,Bv) \neq 0$, which proves that the $\Lambda \otimes K$-submodule $M$
of $K^{2m}$ is not isotropic, and that we have a stable CSA with involution $(D
\otimes M_s(F),\sigma,\{x_1,\cdots,x_g\})$.

For each term $\lambda_{i_1,\cdots,i_{\alpha}} e_{i_1,\cdots,i_{\alpha}}$ in 
$v$, the only term in $Bv$ for which $Q(-,-)$ is nonzero is 
$\lambda_{i_1,\cdots,i_{\alpha}} y_1^{b_1} \cdots y_{\alpha-1}^{b_{\alpha-1}} 
e_{\overline{i_1},\cdots,\overline{i_{\alpha-1}},i_{\alpha}}$. Hence, we have
\begin{align*}
 &Q(v,Bv)= \\
 &\sum_{\parbox{3in}{$i_{\alpha}$ is odd,\\ an even number of 
$i_1,\cdots,i_{\alpha-1}$ are even}} \lambda_{i_1,\cdots,i_{\alpha}}^2 
y_1^{b_1} \cdots y_{\alpha-1}^{b_{\alpha-1}} \frac{1}{\sqrt{x_{\alpha}}} \\
 \pm &\sum_{\parbox{3in}{$i_{\alpha}$ is even,\\ an odd number of 
$i_1,\cdots,i_{\alpha-1}$ are even}} \lambda_{i_1,\cdots,i_{\alpha}}^2 
y_1^{b_1} \cdots y_{\alpha-1}^{b_{\alpha-1}}
\frac{1}{y_{\alpha}\sqrt{x_{\alpha}}}.
\end{align*}

Above, the $\pm$ sign is determined by the parity of the $i_k$'s. Assume
$Q(v,Bv)=0$. Then we have
\begin{align*}
 & \sum_{\parbox{3in}{$i_{\alpha}$ is odd,\\ an even number of 
$i_1,\cdots,i_{\alpha-1}$ are even}} \lambda_{i_1,\cdots,i_{\alpha}}^2 
y_1^{b_1} \cdots y_{\alpha-1}^{b_{\alpha-1}} \\
 = & \sum_{\parbox{3in}{$i_{\alpha}$ is even,\\ an odd number of 
$i_1,\cdots,i_{\alpha-1}$ are even}} \lambda_{i_1,\cdots,i_{\alpha}}^2 
y_1^{b_1} \cdots y_{\alpha-1}^{b_{\alpha-1}} \frac{1}{y_{\alpha}}.
\end{align*}

(Above, we incorporate the possible $-$ sign into the 
$\lambda_{i_1,\cdots,i_{\alpha}}^2$ since the base field $k$ contains a square
root of $-1$.) Multiplying both sides by $y_{\alpha}$, and using Lemma
\ref{lem:pfister} as before we see that this is a contradiction, hence finishing
the proof.
\end{proof}

\subsection{The orthogonal case with $\alpha$ odd and $s\ne 1$}

Recall the definition of the involution $\delta$ on a quaternion algebra 
from the appendix.
The involution 
$$\sigma=\delta_1 \otimes \cdots \otimes \delta_{\alpha}
\otimes t$$
 on $D \otimes M_s(F)$  is orthogonal by
\cite[Proposition 2.23]{KMRT98}.

Consider the following elements of $D\otimes M_s(F)$ :
\begin{align*}
 A &= i_1 \otimes \cdots \otimes i_{\alpha} \otimes \mbox{diag}(1,\cdots,s) \\
 B &= (i_1 \otimes \cdots \otimes i_{\alpha} \otimes M_1) + (j_1 \otimes \cdots
\otimes j_{\alpha} \otimes M_2)
\end{align*}
of $D \otimes M_s(F)$, where we have $M_1, M_2 \in M_s(F)$ defined as
\[
 M_1 = \left(
        \begin{matrix}
          1 & 1 & \cdots & 1 \\
          1 & 0 & \cdots & 0 \\
          \vdots & \vdots & \ddots & \vdots \\
          1 & 0 & \cdots & 0
        \end{matrix}
       \right),
\]
and
\[
 M_2 = \left(
        \begin{matrix}
          0 & 1 & \cdots & 1 \\
         -1 & 0 & \cdots & 0 \\
          \vdots & \vdots & \ddots & \vdots \\
          -1 & 0 & \cdots & 0
        \end{matrix}
       \right).
\]

\begin{prop}\label{prop-so1}
 Suppose $\alpha$ odd and $s\ne 1$.
In the above notation the system of elements
$\lambda_1=A$ and $\lambda_2=\lambda_3=\cdots \lambda_g=B$ gives 
$D\otimes M_s(F)$ the structure of a stable
central simple algebra with orthogonal involution.
\end{prop}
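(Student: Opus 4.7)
The plan follows the template of Propositions \ref{prop-sp1} and \ref{prop-sp2}, adapted to the orthogonal involution $\sigma = \delta_1 \otimes \cdots \otimes \delta_\alpha \otimes t$ and the two-summand expression for $B$. First I verify $\sigma(A) = -A$ and $\sigma(B) = -B$: skewness of $A$ uses $\delta_k(i_k) = -i_k$ (Appendix \ref{sec:csa-inv}) together with $(-1)^\alpha = -1$ and symmetry of $\mathrm{diag}(1,\ldots,s)$ under $t$; skewness of $B$ follows because $M_1$ is symmetric and $M_2$ is skew-symmetric (direct inspection of their entries), so each summand is individually skew once the quaternion sign contributions are accounted for. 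Next I identify the symmetric form $Q$ adjoint to $\sigma$: on each quaternion factor $\delta_k$ is adjoint to the hyperbolic form that exchanges $e_{2k-1}$ and $e_{2k}$, and $t$ is adjoint to $I_s$ on $K^s$. Consequently $Q(e_{i_1,\ldots,i_\alpha}\otimes f_\ell,\, e_{j_1,\ldots,j_\alpha}\otimes f_{\ell'})$ is nonzero precisely when $j_k = \overline{i_k}$ for every $k$ and $\ell = \ell'$.

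Now assume $M \subseteq K^\epsilon$ is a nonzero isotropic $\Lambda \otimes K$-submodule. By $A$-invariance and diagonalizability, $M$ contains a nonzero $A$-eigenvector $v = v_0 \otimes f_\ell$ with $v_0 = \sum_S \lambda_S e_S$ summed over tuples $S$ with an even number of even entries (the other eigenspaces are handled symmetrically). Compute $Bv$ from the explicit formulas for $M_1 f_\ell$, $M_2 f_\ell$, and the action $j_1 \cdots j_\alpha \colon e_S \mapsto \prod_{k: S_k \text{ odd}} y_k \cdot e_{\overline{S}}$. The pairing $Q(v, Bv)$ vanishes for trivial support reasons (the $f$-components of $v$ and $Bv$ are orthogonal), so one must use the isotropy condition $Q(Bv, Bv) = 0$. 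After discarding the nonzero scalar prefactor, which in the case $\ell = 1$ carries the factor $s-1$ and therefore uses the hypothesis $s \ne 1$, this reduces to the Pfister-type identity
\[
\sum_{S} \lambda_S^2 \prod_{k : S_k \text{ odd}} y_k \,=\, 0.
\]
Reparametrising by $b_k := \mathbf{1}_{S_k \text{ odd}} \in \{0,1\}$, this is a subform of the $\alpha$-fold Pfister form $\langle 1, y_1 \rangle \otimes \cdots \otimes \langle 1, y_\alpha \rangle$ over $K$, which remains anisotropic because $K$ is a purely transcendental extension of $k(y_1,\ldots,y_\alpha)$. By Lemma \ref{lem:pfister} every $\lambda_S$ vanishes, contradicting $v \ne 0$.

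The main technical obstacle, relative to the symplectic propositions, is that $B$ no longer preserves the $f_\ell$-decomposition, so $Q(v, Bv)$ collapses for trivial reasons and one is forced to extract the Pfister-type relation from $Q(Bv, Bv)$ instead. This is precisely the role played by the designed pair of matrices $M_1$ (symmetric) and $M_2$ (skew): together they keep $B$ skew under $\sigma$ while coupling the $f_1$-component with the $f_{\ge 2}$-components, so that $Q(Bv, Bv)$ produces a genuinely nontrivial quadratic relation among the $\lambda_S$'s. The assumption $s \ne 1$ is essential both to ensure that $M_1, M_2$ have nontrivial off-diagonal structure and to ensure that the scalar $s-1$ arising in the $\ell = 1$ case is nonzero.
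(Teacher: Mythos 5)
Your proposal is correct and takes essentially the same route as the paper: check $\sigma(A)=-A$, $\sigma(B)=-B$, pass to an $A$-eigenvector inside a putative isotropic $\Lambda\otimes K$-submodule, and extract from $Q(Bv,Bv)=0$ (after discarding the prefactor $2(1-s)\sqrt{x_1\cdots x_\alpha}$, which is where $s\neq 1$ enters) the relation $\sum_S \lambda_S^2\, y_1^{b_1}\cdots y_\alpha^{b_\alpha}=0$, contradicted by the anisotropy of the Pfister form via Lemma \ref{lem:pfister}. One small imprecision, which is only motivational and does not affect the argument: $Q(v,Bv)=0$ is not solely because the $f$-components are orthogonal (the $M_1$-summand of $Bv$ does contain $f_1$); for that summand the vanishing comes from the $e$-part, since $\Delta^{\otimes\alpha}$ pairs $e_S$ only with $e_{\overline{S}}$, whose parity of even entries is wrong when $\alpha$ is odd.
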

\begin{proof}

It is easily checked that $\sigma(A)=-A$ and $\sigma(B)=-B$.

The eigenvalues of $A$ are as in the proof of Proposition \ref{prop-sp1}, and we keep 
the notation. Consider a nonzero $\Lambda \otimes K$-submodule $M$ of $K^{2m}$. 
Take a nonzero vector $v=\sum \lambda_{i_1,\cdots,i_{\alpha}} 
e_{i_1,\cdots,i_{\alpha}} \in M$, $\lambda_{i_1,\cdots,i_{\alpha}} \in K$, that
is an eigenvector for $\sqrt{x_1 \cdots x_{\alpha}}$. (The cases of the other
eigenvalues are similar and are left to the reader.) Then we have

\begin{align*}
 Bv &= \sum \lambda_{i_1,\cdots,i_{\alpha}} (\sqrt{x_1 \cdots x_{\alpha}} 
e_{i_1} \otimes \cdots e_{i_{\alpha}} \otimes (f_1+\cdots f_s) \\
    &+ y_1^{b_1} \cdots y_{\alpha}^{b_{\alpha}} e_{\overline{i_1}} \otimes
\cdots \otimes e_{\overline{i_{\alpha}}} \otimes (-f_2 - \cdots - f_s)).
\end{align*}

The orthogonal involution $\sigma$ is adjoint with respect to the symplectic 
form given by $Q=\Delta \otimes \cdots \otimes\Delta \otimes I_s$. We claim that
$Q(Bv,Bv) \neq 0$, which proves that the $\Lambda \otimes K$-submodule $M$ of
$K^{2m}$ is not isotropic, and that we have a stable CSA with involution $(D
\otimes M_s(F),\sigma,\{x_1,\cdots,x_g\})$.

Indeed, one computes
\[
 Q(Bv,Bv)=2(-s+1)\sum \lambda_{i_1,\cdots,i_{\alpha}}^2 y_1^{b_1} \cdots 
y_{\alpha}^{b_{\alpha}} \sqrt{x_1 \cdots x_{\alpha}}.
\]

By assumption, $-s+1 \neq 0$. Hence if one assumes that $Q(Bv,Bv)=0$, one obtains 
a contradiction using Lemma \ref{lem:pfister}. This finishes the proof.
\end{proof}

\subsection{The orthogonal case with $\alpha$ even and $s\ne 1$}

The involution 
$$\sigma=\delta_1 \otimes \cdots \otimes \delta_{\alpha-1}
\otimes t_{\alpha} \otimes t
$$
 on $D \otimes M_s(F)$  is orthogonal by
\cite[Proposition 2.23]{KMRT98}.

Define two
elements
\begin{align*}
 A &= i_1 \otimes \cdots \otimes i_{\alpha} \otimes \mbox{diag}(1,\cdots,s) \\
 B &= (i_1 \otimes \cdots \otimes i_{\alpha} \otimes M_1) + (j_1 \otimes \cdots
\otimes j_{\alpha-1} \otimes 1 \otimes M_2)
\end{align*}
of $D \otimes M_s(F)$, where $M_1$ and $M_2$ are as in the previous 
subsection.

\begin{prop}\label{prop-so2}
 Suppose $\alpha$ even and $s\ne 1$.
In the above notation the system of elements
$\lambda_1=A$ and $\lambda_2=\lambda_3=\cdots \lambda_g=B$ gives 
$D\otimes M_s(F)$ the structure of a stable
central simple algebra with orthogonal involution.
\end{prop}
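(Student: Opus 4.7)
The plan is to mirror the proof of Proposition \ref{prop-so1}, adapting it to accommodate the modified involution $t_\alpha \otimes t$ on the last two tensor factors, in the same way that Proposition \ref{prop-sp2} adapts Proposition \ref{prop-sp1}. The overall five-step strategy is: check that $\sigma(A)=-A$ and $\sigma(B)=-B$, identify the eigenstructure of $A$, take an eigenvector $v$ in a hypothetical isotropic $\Lambda\otimes K$-submodule $M$, compute $Q(Bv,Bv)$ with respect to the appropriate orthogonal form, and derive a contradiction from $Q(Bv,Bv)=0$ via Lemma \ref{lem:pfister}.

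For the first two steps, the skew-symmetry $\sigma(A)=-A$ and $\sigma(B)=-B$ is a routine computation using the sign conventions of $\delta_j$, $t_\alpha$, $t$ on the skew-generators $i_j$, $j_j$, $M_1$, $M_2$. Because the first $\alpha$ tensor factors of $A$ coincide with those appearing in Proposition \ref{prop-sp1} and the final factor is again $\mathrm{diag}(1,\ldots,s)$, the eigenvalues of $A$ are $\pm k\sqrt{x_1\cdots x_\alpha}$ for $k=1,\ldots,s$, with eigenvectors $e_{i_1,\ldots,i_\alpha}\otimes f_k$ subject to the same parity conditions. Fixing the eigenvalue $\sqrt{x_1\cdots x_\alpha}$ (the other cases are symmetric) and taking a nonzero eigenvector $v=\sum \lambda_{i_1,\ldots,i_\alpha} e_{i_1,\ldots,i_\alpha}\otimes f_1 \in M$, I would compute
\[
 Bv = \sum \lambda_{i_1,\ldots,i_\alpha}\Bigl(\sqrt{x_1\cdots x_\alpha}\,e_{i_1,\ldots,i_\alpha}\otimes M_1 f_1 + y_1^{b_1}\cdots y_{\alpha-1}^{b_{\alpha-1}}\,e_{\overline{i_1},\ldots,\overline{i_{\alpha-1}},i_\alpha}\otimes M_2 f_1\Bigr),
\]
noting that the crucial difference from Proposition \ref{prop-so1} is that the index $i_\alpha$ is \emph{not} flipped in the second summand, because of the $1$ in the $\alpha$-th slot of $B$.

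In the last two steps, I take the form $Q = \Delta\otimes\cdots\otimes\Delta\otimes T\otimes I_s$ to which $\sigma$ is adjoint, where $T$ is the form for $t_\alpha$ on the $\alpha$-th quaternion factor. Expanding $Q(Bv,Bv)$ and using the orthogonality properties of $\Delta$ (which annihilate pairings of $e_i$ with itself) and of $T$, most terms cancel and the remaining sum is proportional to $(1-s)\sum \lambda_{i_1,\ldots,i_\alpha}^{2}\,y_1^{b_1}\cdots y_{\alpha-1}^{b_{\alpha-1}}$ (possibly with a uniform factor of $\sqrt{x_1\cdots x_\alpha}$ or $1/y_\alpha$). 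Assuming $Q(Bv,Bv)=0$ and invoking Lemma \ref{lem:pfister} on the linear independence modulo squares of the monomials in the $y_i$'s forces all $\lambda_{i_1,\ldots,i_\alpha}$ to vanish, contradicting the nonvanishing of $v$. Here the hypothesis $s\ne 1$ is used precisely to ensure $(1-s)\ne 0$.

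The main obstacle is the bookkeeping in the computation of $Q(Bv,Bv)$: I need to verify that both the diagonal terms and the cross terms arising from the two summands of $Bv$ combine to yield exactly a nonzero multiple of a Pfister-type sum. This is more delicate than in Proposition \ref{prop-so1} because the $\alpha$-th tensor slot of the second summand of $B$ is the identity rather than $j_\alpha$, so the pairing on that slot involves $T(i_\alpha(\cdot),\cdot)$ and $T(\cdot,\cdot)$ instead of the fully symmetric expressions used there. Once the relevant cross-pairings are checked to produce the expected $(1-s)$ scalar with no further unexpected cancellations (in analogy with the interplay between $\Sigma$ and $T$ in Proposition \ref{prop-sp2}), the Pfister independence argument completes the proof exactly as in the previous cases.
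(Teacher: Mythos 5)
Your proposal is correct and follows essentially the same route as the paper's proof: verify $\sigma(A)=-A$, $\sigma(B)=-B$, take an eigenvector $v$ of $A$ inside a putative isotropic $\Lambda\otimes K$-submodule, and show $Q(Bv,Bv)\neq 0$ using $s\neq 1$ and Lemma \ref{lem:pfister}. The bookkeeping you flag does come out as you anticipate: the self-pairings of the two summands of $Bv$ vanish for parity reasons (an odd number, $\alpha-1$, of slots get flipped), and the cross-pairings give $Q(Bv,Bv)=2(1-s)\sqrt{x_1\cdots x_\alpha}\sum \lambda_{i_1,\ldots,i_\alpha}^2\, y_1^{b_1}\cdots y_{\alpha-1}^{b_{\alpha-1}}$, which is exactly the expression the paper records (your exponent range $1,\ldots,\alpha-1$ is in fact the correct one).
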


\begin{proof}
It is easily
checked that $\sigma(A)=-A$ and $\sigma(B)=-B$.

The eigenvalues of $A$ are as in the proof of Proposition \ref{prop-sp1}, and we keep 
the notation. Consider a nonzero $\Lambda \otimes K$-submodule $M$ of $K^{2m}$. 
Take a nonzero vector $$v=\sum \lambda_{i_1,\cdots,i_{\alpha}} 
e_{i_1,\cdots,i_{\alpha}} \in M\, ,~\, \lambda_{i_1,\cdots,i_{\alpha}} \in K\, 
,$$ that is 
an eigenvector for $\sqrt{x_1 \cdots x_{\alpha}}$. (The cases of the other eigenvalues are similar and are left to the reader.) Then we have

\begin{align*}
 Bv &= \sum \lambda_{i_1,\cdots,i_{\alpha}} (\sqrt{x_1 \cdots x_{\alpha}} 
e_{i_1} \otimes \cdots e_{i_{\alpha}} \otimes (f_1+\cdots f_s) \\
    &+ y_1^{b_1} \cdots y_{\alpha}^{b_{\alpha}} e_{\overline{i_1}} \otimes
\cdots \otimes e_{\overline{i_{\alpha-1}}} \otimes e_{i_{\alpha}} \otimes (-f_2
- \cdots - f_s)).
\end{align*}

The orthogonal involution $\sigma$ is adjoint with respect to the symplectic 
form given by $Q=\Delta^{\otimes \alpha-1} \otimes I_2 \otimes I_s$. We claim
that $Q(Bv,Bv) \neq 0$, which proves that the $\Lambda \otimes K$-submodule $M$
of $K^{2m}$ is not isotropic, and that we have a stable CSA with involution $(D
\otimes M_m(F),\sigma,\{x_1,\cdots,x_g\})$.

Indeed, one computes
\[
 Q(Bv,Bv)=2(-s+1)\sum \lambda_{i_1,\cdots,i_{\alpha}}^2 y_1^{b_1} \cdots 
y_{\alpha}^{b_{\alpha}} \sqrt{x_1 \cdots x_{\alpha}}\, .
\]

By assumption, $-s+1 \neq 0$. Hence if one assumes $Q(Bv,Bv)=0$, one obtains a
contradiction using Lemma \ref{lem:pfister}. This finishes the proof.
\end{proof}

\begin{rem}\label{rem-so}
In the proofs of Propositions \ref{prop-so1} and \ref{prop-so2}, if one 
replaces the factor $(x_{\alpha},y_{\alpha})$ in $D$ by $M_2(F)$
and $\delta_{\alpha}$) $t_{\alpha}$, while $F$ and $K$ are changed so that they 
have
$\alpha-1$ number of $x$ and $y$ variables; the same proofs carry through and 
hence there
exists a stable CSA with involution $( (x_1,y_1) \otimes \cdots \otimes
(x_{\alpha-1},y_{\alpha-1}) \otimes M_2(F), \sigma, \{x_1,\cdots,x_g\} )$ over
$F$.
\end{rem}

\section{The Main results}\label{sec-results}

Recall that  $\epsilon=2^\alpha s$ with $s$ odd. We also remind the 
reader that $g \geq 2$, where $g$ was defined in Remark \ref{rem-Z}. Recall that
$F=k(x_1, \cdots,x_{\alpha},y_1,\cdots,y_{\alpha})$ and 
$K=k(\sqrt{x_1},\cdots,\sqrt{x_{\alpha}},y_1,\cdots,y_{\alpha})$. 

\begin{thm}\label{thm:amitsurs-algebra}
\mbox{}
\begin{enumerate}
\item For $G(\epsilon)={\rm Sp}(\epsilon)$ or $G(\epsilon)={\rm SO}(\epsilon)$
with $s > 1$, there 
exists a stable CSA with involution $(D \otimes
M_s(F),\sigma,\{x_1,\cdots,x_g\})$ over $F$. 
\item If $G(\epsilon)={\rm SO}(\epsilon)$ with $\epsilon=2^\alpha$, then there
exists a stable CSA with involution $( (x_1,y_1) \otimes \cdots \otimes
(x_{\alpha-1},y_{\alpha-1}) \otimes M_2(F), \sigma, \{x_1,\cdots,x_g\} )$ over
$F$.
\end{enumerate}
\end{thm}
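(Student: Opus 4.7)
\emph{Proof plan.} The theorem is a direct assembly of the four propositions of Section \ref{sec-construction} together with Remark \ref{rem-so}; no new ingredient is required.

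For part (1), I would split into four subcases according to the type of involution (symplectic or orthogonal) and the parity of $\alpha$. The symplectic case with $\alpha$ odd is exactly Proposition \ref{prop-sp1}, using the symplectic involution $\sigma_1\otimes\cdots\otimes\sigma_\alpha\otimes t$ on $D\otimes M_s(F)$ and the elements $\lambda_1=A$, $\lambda_2=\cdots=\lambda_g=B$ constructed there. The symplectic case with $\alpha$ even is Proposition \ref{prop-sp2}, where $\sigma_\alpha$ is swapped for the orthogonal involution $\tau$ on the last quaternion factor so that the overall involution on $D\otimes M_s(F)$ remains symplectic. The orthogonal cases with $s>1$ are handled identically by Propositions \ref{prop-so1} and \ref{prop-so2}; the hypothesis $s>1$ in the theorem comes directly from the assumption $s\ne 1$ imposed in those two propositions.

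For part (2) we have $G(\epsilon)=\mathrm{SO}(\epsilon)$ with $\epsilon=2^\alpha$, i.e.\ $s=1$. The orthogonal constructions of Propositions \ref{prop-so1} and \ref{prop-so2} break down at the final step, since the expression $Q(Bv,Bv)$ there carries a factor $2(-s+1)$ which now vanishes. I would therefore invoke Remark \ref{rem-so}: replace the last quaternion factor $(x_\alpha,y_\alpha)$ by the split algebra $M_2(F)$ (and $\delta_\alpha$ or $t_\alpha$ by the transpose), keep the elements $A$ and $B$ of the same shape but with one fewer quaternion tensor factor, and rerun the eigenspace calculation. The resulting algebra has degree $2^{\alpha-1}\cdot 2=\epsilon$, and the Pfister-type non-vanishing argument (Lemma \ref{lem:pfister}) still applies to the remaining variables $x_1,\ldots,x_{\alpha-1},y_1,\ldots,y_{\alpha-1}$, so every nonzero $\Lambda\otimes K$-submodule of $K^\epsilon$ is again forced to be non-isotropic.

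The real obstacle has already been absorbed into the four propositions, namely the verification via Lemma \ref{lem:pfister} that no nontrivial $\Lambda\otimes K$-invariant subspace can be totally isotropic for the quadratic or symplectic form $Q$. Once those lemmas are in hand, the present theorem reduces to a clean four-way case split together with the $s=1$ adjustment of Remark \ref{rem-so}, and the two displayed algebras in the statement are precisely the ones that arise.
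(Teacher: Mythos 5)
Your proposal matches the paper's proof exactly: the theorem is deduced by citing Propositions \ref{prop-sp1}, \ref{prop-sp2}, \ref{prop-so1}, \ref{prop-so2} for the four cases of part (1) and Remark \ref{rem-so} for part (2), and your observation that the factor $2(-s+1)$ forces the $s=1$ adjustment is precisely the point of that remark. Nothing is missing.
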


\begin{proof}
See Propositions \ref{prop-sp1}, \ref{prop-sp2}, \ref{prop-so1}, \ref{prop-so2}
and Remark \ref{rem-so}.
\end{proof}

\begin{cor}\label{cor-ind}{\ }
Recall the definition of the CSA $\mathcal{B}$ from Example \ref{e:triple}.
\begin{enumerate}
 \item For $G(\epsilon)={\rm Sp}(\epsilon)$ or
$G(\epsilon)={\rm SO}(\epsilon)$ 
with $s > 1$, the index of $\mathcal{B}$ is divisible by $2^\alpha$.
 \item For $G(\epsilon)={\rm SO}(\epsilon)$ with $\epsilon=2^\alpha$, 
the index of $\mathcal{B}$ is divisible by $2^{\alpha-1}$.
\end{enumerate}
\end{cor}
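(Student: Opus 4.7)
The plan is to combine the explicit construction of stable central simple algebras with involution from Theorem \ref{thm:amitsurs-algebra}, the moduli interpretation from Theorem \ref{thm-rep}, and the index computation of Amitsur's algebra in Theorem \ref{t:amitsur-index}, exploiting the fact that the index of an Azumaya algebra can only decrease under pullback to a field.

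In case (1), when $G(\epsilon) = \mathrm{Sp}(\epsilon)$ or $G(\epsilon) = \mathrm{SO}(\epsilon)$ with $s > 1$, Theorem \ref{thm:amitsurs-algebra} produces a stable CSA with involution $(D \otimes M_s(F), \sigma, \{x_1, \ldots, x_g\})$ over the field $F = k(x_1,\ldots,x_\alpha,y_1,\ldots,y_\alpha)$. By Theorem \ref{thm-rep}, this stable CSA with involution is represented by a morphism $\phi: \spec F \longrightarrow Z^s/G(\epsilon)^{ad}$ whose pullback of $\mathcal{B}$ recovers $D \otimes M_s(F)$ as a central simple algebra. By Theorem \ref{t:amitsur-index}, $D$ is a division algebra of index $2^\alpha$, so $D \otimes M_s(F) \cong M_s(D)$ still has index $2^\alpha$.

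In case (2), when $G(\epsilon) = \mathrm{SO}(\epsilon)$ with $\epsilon = 2^\alpha$ (so $s = 1$), Theorem \ref{thm:amitsurs-algebra} instead produces a stable CSA with involution whose underlying algebra is $(x_1,y_1) \otimes \cdots \otimes (x_{\alpha-1},y_{\alpha-1}) \otimes M_2(F)$. This algebra is Brauer-equivalent to the Amitsur-type tensor product of $\alpha - 1$ quaternion algebras, which by Theorem \ref{t:amitsur-index} (applied with $\alpha - 1$ in place of $\alpha$) is a division algebra over its center of index $2^{\alpha - 1}$. Hence the underlying CSA has index $2^{\alpha-1}$. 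Again Theorem \ref{thm-rep} produces a morphism $\phi: \spec F \longrightarrow Z^s/G(\epsilon)^{ad}$ pulling $\mathcal{B}$ back to this algebra.

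To conclude, it suffices to invoke the general principle that, for any Azumaya algebra $\mathcal{A}$ on a noetherian scheme $Y$ and any morphism $\phi: \spec F \longrightarrow Y$ from the spectrum of a field, the index of $\phi^*\mathcal{A}$ divides the (generic) index of $\mathcal{A}$. This follows from Proposition \ref{prop-twisted}: a locally free twisted sheaf of rank $n$ representing the Brauer class of $\mathcal{A}$ on a suitable open pulls back to a locally free twisted sheaf of rank $n$ on $\spec F$, so the pullback index divides $n$. Applying this with $\mathcal{A} = \mathcal{B}$ and the morphism $\phi$ constructed above yields the divisibility statements in each of the two cases. The only mildly delicate point is making the pullback step precise, since $\mathcal{B}$ lives on the coarse space $Z^s/\!\!/G(\epsilon)$ while $\phi$ lands on the stack $Z^s/G(\epsilon)^{ad}$; one simply composes with the structure map to the coarse moduli and uses that the equivalence in the proof of Theorem \ref{thm-rep} was set up precisely so that this pullback recovers the given CSA.
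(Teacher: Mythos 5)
Your argument is correct and follows the paper's own route: the paper proves this corollary precisely by combining Theorem \ref{thm:amitsurs-algebra} (the explicit stable CSA with involution), Theorem \ref{thm-rep} (realizing it as the pullback of $\mathcal{B}$ along a field-valued point), and Theorem \ref{t:amitsur-index} (the index of Amitsur's algebra), with the pullback-divisibility principle you spell out via Proposition \ref{prop-twisted} left implicit. Your write-up just makes that last bookkeeping step explicit, which is a harmless elaboration rather than a different method.
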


\begin{proof}
 Combine Theorem \ref{thm-rep} with Theorem \ref{t:amitsur-index} and Theorem
\ref{thm:amitsurs-algebra}.
\end{proof}

Let $\widehat{F}=k((x_1,y_1,\cdots,x_{\alpha},y_{\alpha}))$ and 
$\widehat{K}=k((\sqrt{x_1},y_1,\cdots,\sqrt{x_{\alpha}},y_{\alpha}))$. (In the 
case 
of $G(\epsilon)={\rm SO}(\epsilon)$ with $\epsilon=2^\alpha$, 
it is understood that there would be $\alpha-1$ number of $x$ and $y$ variables 
in the
definition. We will assume this tacitly to avoid repetition in the below proof.)
Let $\psi_0$ denote the maps ${\rm Spec}\, F\longrightarrow 
Z^s/G(\epsilon)^{ad}$
constructed in the proof of Theorem \ref{thm:amitsurs-algebra}. Composing
$\psi_0$ with the canonical map ${\rm Spec}(\widehat{F})\longrightarrow 
\mbox{Spec}(F)$, we
obtain a map 
$$\phi_0:\mbox{Spec}(\widehat{F})\longrightarrow Z^s/G(\epsilon)^{ad}\, .$$
 We have
$$\phi_0^*(\mathcal{A}) \cong (x_1,y_1) \otimes \cdots \otimes
(x_{\alpha},y_{\alpha}) \otimes M_s(\widehat{F})\, .$$

Recall the open subscheme
$$\spec(\widehat{\struct_0})^t $$
constructed in Section 2. 
 
\begin{prop}\label{prop-factor}
We have a factorization :
\[
 \xymatrix{
 \spec(\widehat{F}) \ar[d] \ar[dr] & \\
  \spec(\widehat{\struct_0})^t \ar[r]& Z^s/G(\epsilon)^{ad}.
}
\]
\end{prop}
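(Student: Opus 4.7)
The plan is to construct the factorization in three stages by exploiting the explicit form of the stable CSA with involution that underlies $\phi_0$. Recall that $\phi_0$ is obtained by composing the morphism $\psi_0 : \spec(F) \to Z^s/G(\epsilon)^{ad}$ --- attached by Theorem \ref{thm-rep} to the triple $(D \otimes M_s(F), \sigma, \lambda_1 = A, \lambda_2 = \cdots = \lambda_g = B)$ of Propositions \ref{prop-sp1}--\ref{prop-so2} and Remark \ref{rem-so} --- with the base change $\spec(\widehat F) \to \spec(F)$.

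The key observation is that the entries of $A$ and $B$ are polynomial (with no denominators) in the square roots $\sqrt{x_i}$ and $\sqrt{y_i}$. Hence evaluating any $G(\epsilon)$-invariant polynomial function on the $g$-tuple $(A, B, \ldots, B)$ produces an expression that is Galois-invariant under the $(\ZZ/2)$-action independently flipping each square root, and therefore lies in $k[x_1, y_1, \ldots, x_\alpha, y_\alpha] \subset F$; moreover, since $A = B = 0$ when $x_i = y_i = 0$, every invariant vanishing at $0 \in Z$ lands in the augmentation ideal. Thus the underlying coarse map $\spec(F) \to Z/\!\!/G(\epsilon)$ extends to a morphism $\spec k[x_i,y_i] \to Z/\!\!/G(\epsilon)$ sending the origin to the image of $0\in Z$. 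Completing yields a local homomorphism $\widehat{\struct_{Z/\!\!/G(\epsilon), 0}} \to k[[x_i, y_i]]$, which via the identification $\widehat{\struct_0} \cong \widehat{\struct_{Z/\!\!/G(\epsilon), 0}}$ of Remark \ref{rem-Z} gives a morphism $\spec(\widehat F) \to \spec k[[x_i,y_i]] \to \spec(\widehat{\struct_0})$; Proposition \ref{p:same} and the étale diagram \eqref{e:diagram} ensure that this lift is compatible with $\phi_0$ as a map into the stack $Z^s/G(\epsilon)^{ad}$, and not merely as a map of coarse spaces.

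It remains to check that the image of $\spec(\widehat F)$ lies in the open subscheme $\spec(\widehat{\struct_0})^t$, i.e., in both $\pi_W^{-1}(Z^s/H_s)$ and $\pi_U^{-1}((U/\!\!/\G)^{rs})$. The first condition is immediate, since $\phi_0$ factors through the stable locus $Z^s/G(\epsilon)^{ad}$ by construction (Theorem \ref{thm:amitsurs-algebra}). For the second --- which I expect to be the main obstacle --- I would argue via the étale slice of Theorem \ref{thm-les} and Proposition \ref{p:normal} that, near the marked point, a point of $V/\!\!/H_s$ is regularly stable in $U/\!\!/\G$ precisely when its image in $Z/\!\!/H_s$ has trivial $G(\epsilon)^{ad}$-stabilizer, i.e., lies in $Z^s/H_s$. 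Thus on the formal neighborhood the two open conditions coincide and both reduce to stability of the CSA. Making this étale-local comparison rigorous --- tracking that regular stability on the moduli corresponds to stability on the normal slice --- is the technically delicate step of the argument.
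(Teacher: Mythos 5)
Your first two stages are in substance the paper's own argument (the paper packages them as a morphism $\Phi\colon \spec(R)\to Z$ with $R=k[\sqrt{x_1},\dots,\sqrt{x_\alpha},y_1,\dots,y_\alpha]$, completes at $P=\Phi(0)$, and uses completeness of $\widehat F$), but the justification you give for the crucial specialization step is false. It is not true that $A=B=0$ when $x_i=y_i=0$: setting $\sqrt{x_i}=y_i=0$ kills $A$, but each $j_\ell$ specializes to the nonzero nilpotent matrix $\left(\begin{smallmatrix}0&1\\0&0\end{smallmatrix}\right)$, so $B$ specializes to a nonzero nilpotent $N$ and the tuple specializes to a point $P=(0,N,\dots,N)\neq 0$ of $Z$. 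What you actually need (and what the paper also needs, tacitly, to compare $\struct_{Z/\!\!/G(\epsilon),0}$ with $\struct_{Z,P}$) is that $P$ maps to $0$ in $Z/\!\!/G(\epsilon)$, i.e.\ that every invariant vanishing at $0$ vanishes on $(0,N,\dots,N)$; this is true (the invariants are generated by traces of words, and every word in $(0,N,\dots,N)$ is zero or a power of the nilpotent $N$), but it is a separate argument which your write-up replaces by an incorrect assertion. Minor further slips: the entries are polynomials in $\sqrt{x_i}$ and $y_i$ (not $\sqrt{y_i}$); the Galois-invariance of the evaluated invariants comes from the descent cocycle (the conjugates of the tuple are $G(\epsilon)^{ad}$-conjugate to it), not from the entries being polynomials in square roots; and Proposition \ref{p:same} is irrelevant to the factorization, since $Z^s/G(\epsilon)^{ad}$ is a scheme and compatibility with $\phi_0$ holds because the identification $\widehat{\struct_0}\cong\widehat{\struct_{Z/\!\!/G(\epsilon),0}}$ is induced by $\pi_W$ in \eqref{e:diagram} and your invariant-evaluation map is the coarse map underlying $\phi_0$.

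The serious gap is your third stage. You verify only the $\pi_W$-half of membership in the $t$-locus, leave the $\pi_U^{-1}((U/\!\!/\G)^{\rm rs})$ half as an expectation, and the equivalence you propose to prove it (``near the marked point, regularly stable in $U/\!\!/\G$ if and only if the image lies in $Z^s/H_s$'') is not correct: via the strongly \'etale maps of Theorem \ref{thm-les}, regular stability corresponds to the corresponding tuple having stabilizer exactly $\mu_2$ in $G(\epsilon)$, whereas $Z^s$ only records the absence of isotropic submodules, and stable tuples can have strictly larger finite stabilizers --- this is precisely why Example \ref{ex-slice} claims the $\mu_2$-gerbe structure on $W$ only \emph{generically}, and why the $t$-locus is defined as an intersection of two conditions rather than one. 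Concretely, in the symplectic cases with $s>1$ the elements $1\otimes\cdots\otimes 1\otimes \diag(\pm1,\dots,\pm1)$ commute with $A$ and $B$ and preserve $Q$, so your proposed identification of the two open conditions fails at exactly the kind of points under discussion. The paper handles this step by a different, much shorter argument: $\psi_0$ lands in the stable locus and $(Z/\!\!/G)^t$ is a dense open subset of the irreducible stable quotient, after which it only remains to factor $\phi_0$ through the completed local ring at $0$. So the step you yourself flag as the delicate one is both unproven in your proposal and planned around an equivalence that does not hold.
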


\begin{proof}
Firstly the map $\psi_0$ factors through $(Z/\!\!/G)^t$ as it factors
through the stable locus as $(Z/\!\!/G)^t$ is dense in $(Z/\!\!/G)^s$, being 
an open subset of an irreducible set. So it suffices to show that
$\phi_0$ factor through the completion of the local ring at $0$.

The remainder of the proof is essentially the same as the proof of Corollary
6.2 in \cite{CDL10}.

We take $R=k[\sqrt{x_1},\cdots,\sqrt{x_{\alpha}},y_1,\cdots,y_{\alpha}]$.
As the formulas for the $\lambda_i$ in Propositions \ref{prop-sp1},
\ref{prop-sp2}, \ref{prop-so1} and \ref{prop-so2} do not involve
denominators there is a
diagram
\[
 \xymatrix{
 \spec(R)        \ar[dr]^\Phi       & \\
 \spec(K) \ar[u] \ar[d] & Z  \ar[d]\\
 \spec(F)         \ar[r]      & Z/\!\!/G(\epsilon)
}
\]

Denote $\Phi(0)=P \in Z$. Hence we get a map on completions of local rings, and we obtain a map
\begin{equation*}
 \phi_c:{\rm Spec}(\widehat{K}) \longrightarrow \widehat{\mathcal{O}_{Z,P}}.
\end{equation*}
In view of Theorem \ref{thm:amitsurs-algebra}, it is enough to show that it 
descends to a morphism ${\rm Spec}(\widehat{F}) \longrightarrow
\widehat{\mathcal{O}_{Z/\!\!/G,0}^{s}}$. We have the following commutative 
diagram
\[
 \begin{CD}
  \mathcal{O}_{Z/\!\!/G(\epsilon),0} @>>> \widehat{F} \\
  @VVV @VVV \\
  \mathcal{O}_{Z,P} @>>> \widehat{K}
 \end{CD}
\]
with $\phi_c$ induced by completion from the bottom line. The image of the 
maximal ideal of $\mathcal{O}_{Z/\!\!/G(\epsilon),0}$ must be contained in the 
$\mathcal{O}_{Z/\!\!/G(\epsilon),0}$ submodule of $\widehat{F}$ generated by 
$(x_1,y_1,\cdots,x_{\alpha},y_{\alpha})$. But the field $\widehat{F}$ is 
complete with respect to the induced topology and hence we obtain our map.
\end{proof}

\begin{thm}[Index of the canonical gerbe]{\ }
\label{t:theindex}
\begin{enumerate}
 \item For $G(\epsilon)={\rm Sp}(2n)$ or $G(\epsilon)={\rm SO}(2n)$ with $s > 
1$, the 
index of 
the 
canonical gerbe $\text{Bun}^{\rm rs}_{G} \longrightarrow M^{\rm rs}_{G}$ is 
$2^{\alpha}$.
 \item For $G(\epsilon)={\rm SO}(2n)$ with $\epsilon=2^\alpha$, the index of the 
canonical gerbe $\text{Bun}^{\rm rs}_{G} \longrightarrow M^{\rm rs}_{G}$ is 
$2^{\alpha-1}$ or $2^{\alpha}$.
\end{enumerate}
\end{thm}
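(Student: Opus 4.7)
The plan is to pincer the index of the canonical gerbe between an upper and lower bound coming from opposite directions. The upper bound is essentially in hand: Proposition \ref{prop-indexdivides} shows the index divides $\epsilon = 2^\alpha s$, while by Remark \ref{rem-period2} the period is $2$, so by Proposition \ref{prop-periodindex} the index is a power of $2$. Hence the index divides $\gcd(\epsilon,2^\infty)=2^\alpha$ in all cases.

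For the lower bound, the strategy is to pull the canonical gerbe back to $\spec(\widehat F)$ and evaluate it explicitly there. Starting from the stable CSA with involution constructed in Theorem \ref{thm:amitsurs-algebra} over $F=k(x_1,\ldots,y_\alpha)$, Theorem \ref{thm-rep} yields a morphism $\psi_0:\spec(F)\longrightarrow Z^s/G(\epsilon)^{ad}$ along which $\mathcal{B}$ pulls back to $D\otimes M_s(F)$ (case 1) or to $(x_1,y_1)\otimes\cdots\otimes(x_{\alpha-1},y_{\alpha-1})\otimes M_2(F)$ (case 2). Composing with the canonical map $\spec(\widehat F)\longrightarrow\spec(F)$ gives $\phi_0$, and Proposition \ref{prop-factor} provides the essential factorization $\spec(\widehat F)\longrightarrow\spec(\widehat{\struct_0})^t\longrightarrow Z^s/G(\epsilon)^{ad}$. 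Now Proposition \ref{p:same} identifies, inside $\Br(\spec(\widehat{\struct_0})^t)$, the Brauer classes of the pullback of $\mathcal{B}$ and of the pullback of the canonical gerbe $\text{Bun}^{\rm rs}_G\longrightarrow M^{\rm rs}_G$ (through the diagram \eqref{e:diagram} connecting the slice to $U$). Pulling further back to $\spec(\widehat F)$, we conclude that the Brauer class of the canonical gerbe restricted to $\spec(\widehat F)$ is represented by $D\otimes M_s(\widehat F)$ in case 1, and by the corresponding tensor product in case 2.

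Next I would invoke the fact that the index of a Brauer class only decreases under pullback: a twisted locally free sheaf of rank $r$ on the ambient gerbe restricts to a twisted locally free sheaf of rank $r$, so by Proposition \ref{prop-twisted} applied at the generic point, the index of the restriction to $\spec(\widehat F)$ divides the index of the canonical gerbe. Combined with the computation that $D\otimes\widehat F$ has index $2^\alpha$ (respectively $2^{\alpha-1}$ in case 2), this yields the desired lower bound. Putting the upper and lower bounds together gives the exact index $2^\alpha$ in case 1, and the two-valued statement $2^{\alpha-1}$ or $2^\alpha$ in case 2 (the ambiguity being genuine because in case 2 the Amitsur-style construction loses one power of $2$).

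The main obstacle will be confirming that $D$ remains a division algebra—equivalently retains index $2^\alpha$—after passing from the rational function field $F$ to the iterated Laurent series field $\widehat F=k((x_1,y_1,\ldots,x_\alpha,y_\alpha))$. This is where Theorem \ref{t:amitsur-index} must be reused: the same ramification argument of \cite{sA72}, applied with the $(x_i,y_i)$-adic valuation on $\widehat F$ rather than the original generic point argument, preserves the index, since iterating residue maps peels off the symbols $(x_i,y_i)$ one at a time. Once this technical point is in place, the rest of the proof is pure bookkeeping of the bounds.
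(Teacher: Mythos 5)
Your proposal is correct and follows the paper's own argument essentially step for step: the upper bound from Proposition \ref{prop-indexdivides} together with the period being $2$ (Remark \ref{rem-period2} and Proposition \ref{prop-periodindex}), and the lower bound by transporting the stable CSA of Theorem \ref{thm:amitsurs-algebra} through Theorem \ref{thm-rep}, Proposition \ref{prop-factor} and Proposition \ref{p:same} so that the gerbe's class over $\spec(\widehat{F})$ is represented by Amitsur's algebra, whose index is $2^{\alpha}$ (respectively $2^{\alpha-1}$ in the ${\rm SO}$ case with $s=1$). The two technical points you flag --- that the algebra retains its index after passing from $F$ to the iterated Laurent series field $\widehat{F}$, and that the index can only drop under the restriction to $\spec(\widehat{F})$ --- are precisely the steps the paper leaves implicit in citing Corollary \ref{cor-ind}, so your write-up is, if anything, slightly more explicit than the published proof.
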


\begin{proof}
By Propositions \ref{p:same} and \ref{prop-factor}, the 
index of the canonical gerbe is divisible by $2^\alpha$ (or $2^{\alpha-1}$ in the second case). By Proposition \ref{prop-indexdivides}, the index of the canonical gerbe divides $\epsilon$ and hence it divides $2^\alpha$. The result now follows from Corollary \ref{cor-ind}.
\end{proof}

\appendix

\section{An Anisotropic Form over the Field of Rational Functions}
The following well-known result was used in the construction of stable CSAs with involution.
\begin{lem}\label{lem:pfister}
The $n$-Pfister form $\ll t_1,\cdots,t_n \gg$ over the field 
$k(t_1,\cdots,t_n)$ is anisotropic.
\end{lem}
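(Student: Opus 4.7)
The plan is induction on $n$, using the $t_n$-adic valuation on $k(t_1,\ldots,t_n)$ to reduce anisotropy at level $n$ to anisotropy at level $n-1$.

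\textbf{Base case.} For $n=1$, $\ll t_1\gg = \langle 1,-t_1\rangle$ is anisotropic over $k(t_1)$ because $t_1$ is not a square in the UFD $k[t_1]$, hence not in its fraction field.

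\textbf{Inductive step.} Set $F = k(t_1,\ldots,t_{n-1})$ and $\phi = \ll t_1,\ldots,t_{n-1}\gg$, anisotropic over $F$ by hypothesis. I would use the orthogonal decomposition
\[
 \ll t_1,\ldots,t_n\gg \;=\; \phi \,\perp\, (-t_n)\phi
\]
over $F(t_n)$, and appeal to (a mild form of) Springer's theorem for the $t_n$-adic valuation: a form $q_1 \perp t_n q_2$ with $q_1,q_2$ defined over $F$ is anisotropic over the completion $F((t_n))$ whenever both $q_i$ are anisotropic over $F$, and hence also anisotropic over the subfield $F(t_n)$.

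If a direct argument is preferred, one can argue as follows. Any isotropic vector over $F(t_n)$ yields $u,v\in F(t_n)^{2^{n-1}}$, not both zero, with $\phi(u) = t_n\,\phi(v)$. After clearing denominators one may assume $u,v\in F[t_n]^{2^{n-1}}$, and after cancelling the largest common power of $t_n$ from the pair $(u,v)$ one may further assume that $(u(0),v(0))\neq 0$. Substituting $t_n=0$ in the relation $\phi(u) - t_n\phi(v) = 0$ gives $\phi(u(0))=0$, forcing $u(0)=0$ by the inductive anisotropy of $\phi$. Hence $t_n\mid u$; writing $u = t_n u'$ and dividing through by $t_n$ yields $t_n\,\phi(u') = \phi(v)$, and evaluating at $t_n=0$ now forces $v(0)=0$, contradicting the normalization.

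The entire argument is standard and the only delicate point is the bookkeeping of powers of $t_n$ when cancelling common factors; this is exactly what Springer's theorem packages for us, so I do not anticipate a substantive obstacle.
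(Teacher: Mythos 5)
Your proof is correct and takes essentially the same route as the paper: the paper also argues by induction on $n$, specializing at $t_n=0$ twice to force $t_n$ to divide everything, only it clears denominators fully in $k[t_1,\cdots,t_n]$ (``no common factors'') where you work in $F[t_n]$ with $F=k(t_1,\cdots,t_{n-1})$, and your appeal to Springer's theorem is just the packaged form of that same step. The sign discrepancy between your convention $\ll t_1 \gg=\langle 1,-t_1\rangle$ and the paper's $\langle 1,t_1\rangle$ is immaterial here, since $k$ is algebraically closed and hence $-1$ is a square, making the two forms isometric.
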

\begin{proof}
We use induction on $n$. For $n=1$, the form $\ll t_1 \gg=\langle 1,t_1 \rangle$ has no isotropic vectors as the equation
\[
 f_1^2+t_1 f_2^2=0
\]
implies that $t_1$ is a square in $k(t_1,\cdots,t_n)$, a contradiction.

Assume that the statement is proved for $(n-1)$-Pfister forms, and consider the 
$n$-Pfister form $\ll t_1,\cdots,t_n \gg$. Assume that there exists an 
isotropic vector for $\ll t_1,\cdots,t_n \gg$, hence an equation
\[
 \sum_{I \subset \{ 1,\cdots,n \} } t_I f_I^2=0\, ,
\]
where $I$ runs over all subsets of $\{1,\cdots,n\}$, $t_I$ is the monomial 
obtained by multiplying the $t_i$ for which $i \in I$ and, by clearing denominators and removing common factors, we assume that the $f_I$ are polynomials with no common factors.

By setting $t_n=0$, we have $\sum_{I \subset \{ 1, \cdots, n-1 \} } t_I 
\overline{f_I}^2=0$, where $\overline{f_I}$ denotes the reduction of $f_I$ 
modulo $t_n$. By the induction hypothesis, this can only happen if all the 
$\overline{f_I}$ are zero; i.e., when $t_n$ divides the $f_I$ for $I \subset \{ 
1, \cdots, n-1 \}$.

Now rearrange the equation above to get
\[
 t_n^2 g = -t_n \sum_{I \subset \{ 1, \cdots, n-1 \} } t_{I \cup \{n\}} f_{I 
\cup \{n\}}^2
\]
for some polynomial $g$. After cancelling $t_n$ and setting $t_n=0$ again, we 
obtain a similar equation 
\[
\sum_{I \subset \{ 1, \cdots, n-1 \} } t_{I \cup \{n\}} \overline{f_{I \cup 
\{n\}}}^2=0\, ,
\]
which again implies that $t_n$ divides the remaining $f_I$. Hence $t_n$ divides all the $f_I$, which is a contradiction.
\end{proof}

\section{Central Simple Algebras with Involution}\label{sec:csa-inv}

Let $k$ be a field. Recall that a \emph{central simple algebra} over $k$ is a $k$-algebra $A$ which is finite-dimensional as a $k$-vector space, whose center is $k$ (viewed as a subring of $A$) and which has no proper, non-trivial two-sided ideals. Given a central simple algebra $A$ over $k$, there exists a finite field extension $L$ of $k$ such that $A_L=A \otimes_k L$ is isomorphic to a matrix algebra $M_n(L)$ over $L$. Hence, the dimension of $A$ over $k$ is a square $n^2$. The number $n$ is called the \emph{degree} of $A$.

Given two central simple algebras $A$ and $B$ over $k$, we call $A$ and $B$ 
\emph{Brauer-equivalent} if there exist natural numbers $m$ and $n$ such that 
$M_m(k) \otimes_k A \cong M_n(k) \otimes_k B$. The set of Brauer-equivalence 
classes of central simple algebras has the structure of an abelian group, 
denoted ${\rm Br}(k)$, described as follows. Multiplication of two elements 
$[A],[B] \in {\rm Br}(k)$ is given by $[A][B]=[A \otimes_k B]$, the identity 
element 
is given by the equivalence class $[k]$ of $k$ itself, and inverses are given by 
$[A]^{-1}=[A^\circ]$, where $A^\circ$ is the opposite algebra of $A$. 
${\rm Br}(k)$ 
is a torsion abelian group, and the order of an element of $a \in {\rm Br}(k)$ 
is 
called the \emph{period} of $a$.

By a theorem of Wedderburn, every central simple algebra $A$ over $k$ can be 
written as $M_n(D)$, where $D$ is a \emph{central division algebra} over $k$, 
meaning a central simple algebra over $k$ that is a division ring. $D$ is 
unique 
up to isomorphism. Hence, the degree of $D$ is well-defined, and is called the 
\emph{index} of $A$. Two Brauer-equivalent central simple algebras $A$ and $B$ 
over $k$ have the same index, hence the index is defined for elements of 
${\rm Br}(k)$.

There is a natural isomorphism
\[
{\rm Br}(K)\stackrel{\sym}{\longrightarrow} {\rm H}^2(K,\gm),
\]
and hence associated to every central simple algebra is a gerbe. The notion of index and period defined here agrees with the one in Section \ref{sec-twisted}. Hence by Proposition \ref{prop-periodindex}, we have that the period divides the index, and that the period and the index have the same prime powers.

An \emph{involution of the first kind} on a central simple algebra $A$ over $k$ 
is an additive map $\sigma: A \longrightarrow A$ such that 
$\sigma(xy)=\sigma(y)\sigma(x)$, 
$\sigma^2={\rm Id}_A$ and $\sigma(\lambda)=\lambda$ for all $\lambda \in k$. 
From now 
on, we will refer to an involution of the first kind as simply an involution.

Consider the central simple algebra $M_n(k)$ over $k$, which can also be viewed 
as $\mbox{End}_k(V)$, where $V$ is an $n$-dimensional vector space over $k$. 
Then there is a one-to-one correspondence between involutions on 
$\mbox{End}_k(V)$ and equivalence classes of nonsingular bilinear forms on $V$ 
modulo multiplication by an element of $k^\times$ that are either symmetric or 
skew-symmetric. (See the Theorem in the introduction to \cite[Chapter 
1]{KMRT98}.) Let $b$ be a symmetric or skew-symmetric bilinear form on $V$, and 
$\sigma$ the corresponding involution on $\mbox{End}_k(V)$. Fix an ordered basis 
for $V$ and denote the Gram matrix of $b$ with respect to this basis by $g \in 
{\rm GL}_n(k)$. Here, $g^t=g$ if $b$ is symmetric and $g^t=-g$ if $b$ is skew-symmetric. Then the involution $\sigma$ is given by
\begin{equation*}
 \sigma(m)=g^{-1}m^t g
\end{equation*}
for $m \in M_n(k)$.

Let $A$ be a central simple algebra over $k$, and $\sigma$ an involution on 
$A$. Choose a field extension $L$ of $k$ that splits $A$, i.e., 
$A_L\,=\,M_n(L)$. 
Over this base extension, consider the bilinear form $b$ that corresponds to the 
involution $\sigma_L = \sigma \otimes_k {\rm Id}_L$. If $b$ is symmetric, 
$\sigma$ is 
called \emph{orthogonal}; and if $b$ is skew-symmetric, $\sigma$ is called \emph{symplectic}.

Write $2n=2^{\alpha} m$ where $m$ is odd. Let 
$$F=\Bbbk(x_1,y_1,\cdots,x_{\alpha},y_{\alpha}) ~\, \text{ and }\,~ 
K=\Bbbk(\sqrt{x_1},y_1,\cdots,\sqrt{x_{\alpha}},y_{\alpha})\, .$$ Then $K/F$ is 
a Galois 
extension with Galois group isomorphic to $(\ZZ/2\ZZ)^{\alpha}$.

For $\ell=1,\cdots,\alpha$, let $(x_{\ell},y_{\ell})$ denote the quaternion 
algebra over $F$ having a basis $\{1,i,j,k\}$ such that $i^2=x_{\ell}$, 
$j^2=y_{\ell}$ and $k=ij=-ji$. The \emph{quaternion conjugation} or 
\emph{canonical involution} is the $F$-linear map 
$\sigma:(x_{\ell},y_{\ell})\longrightarrow 
(x_{\ell},y_{\ell})$ given by $a+bi+cj+dk \mapsto a-bi-cj-dk$. By \cite[Proposition 2.21]{KMRT98}, the canonical involution is the only symplectic involution on $(x_{\ell},y_{\ell})$. Note that over $K$, $(x_{\ell},y_{\ell})$ splits, and we have $\sigma_K=\mbox{Int}(\Sigma) \circ t$; where
\begin{equation*}
\Sigma=\left(
 \begin{matrix}
  0  &  1 \\
  -1 &  0
 \end{matrix}
\right),
\end{equation*}
and $t:M_2(K)\longrightarrow M_2(K)$ denotes the transpose involution. For later 
reference, 
we note that over $K$, $i_{\ell}$ and $j_{\ell}$ are represented by the matrices

\begin{equation*}
\left(
 \begin{matrix}
  \sqrt{x_{\ell}} & 0 \\
  0 & -\sqrt{x_{\ell}}
 \end{matrix}
\right),
\end{equation*}
and
\begin{equation*}
\left(
 \begin{matrix}
  0 & 1 \\
  y_{\ell} & 0
 \end{matrix}
\right).
\end{equation*}

We will also need two orthogonal involutions on $(x_{\ell},y_{\ell})$. Define $\tau=\mbox{Int}(k) \circ \sigma$ and $\delta=\mbox{Int}(i) \circ \sigma$. Then $\tau$ is given by $a+bi+cj+dk \mapsto a+bi+cj-dk$, $\delta$ is given by $a+bi+cj+dk \mapsto a-bi+cj+dk$ and they are orthogonal involutions by \cite[Proposition 2.21]{KMRT98}. After splitting $(x_{\ell},y_{\ell})$ by extending the base field to $K$, $\tau=\mbox{Int}(T) \circ t$ and $\delta=\mbox{Int}(\Delta) \circ t$, where
\begin{equation*}
T=\left(
 \begin{matrix}
  -\sqrt{x_{\ell}} & 0 \\
  0 & -y_{\ell} \sqrt{x_{\ell}}
 \end{matrix}
\right),
\end{equation*}
and
\begin{equation*}
\Delta=\left(
 \begin{matrix}
  0 & 1 \\
  1 & 0
 \end{matrix}
\right).
\end{equation*}

We note that on $M_m(K)$, the transpose involution $t$ is orthogonal.

The involutions $t \otimes_F {\rm Id}_K$, $\sigma \otimes_F {\rm Id}_K$, $\tau 
\otimes_F 
{\rm Id}_K$ and $\delta \otimes_F {\rm Id}_K$ on $(x_{\ell},y_{\ell}) \otimes_F 
K$ are 
the 
adjoint involutions with respect to the following forms: $t \otimes_F {\rm 
Id}_K$ is 
adjoint with respect to the orthogonal form represented with the identity matrix 
$I_m$, $\sigma \otimes_F {\rm Id}_K$ is adjoint with respect to the symplectic 
form represented by the matrix
\begin{equation*}
\Sigma^{-1}=\left(
 \begin{matrix}
  0 & -1 \\
  1 & 0
 \end{matrix}
\right),
\end{equation*}
$\tau \otimes_F {\rm Id}_K$ is adjoint with respect to the orthogonal form 
represented by the matrix
\begin{equation*}
T^{-1}=\left(
 \begin{matrix}
  -\frac{1}{\sqrt{x_{\ell}}} & 0 \\
  0 & -\frac{1}{y_{\ell} \sqrt{x_{\ell}}}
 \end{matrix}
\right),
\end{equation*}
and $\delta \otimes_F{\rm Id}_K$ is adjoint with respect to the orthogonal form 
represented by the matrix
\begin{equation*}
\Delta^{-1}=\left(
 \begin{matrix}
  0 & 1 \\
  1 & 0
 \end{matrix}
\right).
\end{equation*}


\begin{thebibliography}{111}

\bibitem{sA72}
S.~A. Amitsur, On central division algebras, \textit{Israel Jour. Math.} 
\textbf{12} (1972), 408--420.

\bibitem{BBN01} V.~Balaji, I.~Biswas and D.~S.~Nagaraj, Principal bundles over 
projective manifolds with parabolic structure over a divisor, \textit{Tohoku 
Math. J.} \textbf{53} (2001), 337--367.

\bibitem{BH} 
I.~Biswas and N.~Hoffmann, Poincar\'{e} families and automorphisms of 
principal bundles on a curve, \textit{C. R. Math. Acad. Sci. Paris}  
\textbf{347} (2009), 1285--1288.

\bibitem{BH10}
I.~Biswas and N.~Hoffmann, Poincar\'{e} families of $g$-bundles on a curve, 
\textit{Math. Ann.} \textbf{352} (2012), 133--154.

\bibitem{BRV11}
P.~Brosnan, Z.~Reichstein and A.~Vistoli (with an appendix by N.~Fakhruddin), 
Essential dimension of moduli of curves and other algebraic stacks, \textit{J. 
Eur. Math. Soc.} \textbf{13} (2011), 1079--1112.

\bibitem{CDL10}
E.~Coskun, A.~Dhillon, and N.~Lemire, On Nori's obstruction to universal 
bundles, \textit{Jour. Ramanujan Math. Soc.} \textbf{25} (2010), 359--376.

\bibitem{DF93}
R.~K. Dennis and B.~Farb, \emph{Noncommutative algebra}, Graduate Texts in 
Mathematics, No. 144, Springer-Verlag, 1993.

\bibitem{jmD04}
J.-M. Dr\'ezet, Luna's slice theorem and applications, Algebraic group actions 
and quotients, 39--89, Hindawi Publ. Corp., Cairo, 2004.

\bibitem{jG71}
J. Giraud, \emph{Cohomologie non ab\'elienne}, Die Grundlehren der 
mathematischen Wissenschaften, Band 179, Springer-Verlag, 1971.

\bibitem{KMRT98}
M.-A. Knus, A.~Merkurjev, M.~Rost, and J.-P. Tignol, \emph{The book of 
involutions}, American Mathematical Society, Providence, RI, 1998.

\bibitem{mL08}
M.~Lieblich, Twisted sheaves and the period-index problem, \textit{Compos. 
Math.} \textbf{144} (2008), 1--31.

\bibitem{jM80}
J.~S. Milne, \emph{\'Etale cohomology}, Princeton Mathematical Series, No.~33, 
Princeton University Press, 1980.

\bibitem{jpS97}
J.-P. Serre, \emph{Galois cohomology}, Springer-Verlag, 1997.

\bibitem{csS69}
C.~S.~Seshadri, Moduli of $\pi$-vector bundles over an algebraic curve, 
\textit{Questions on Algebraic Varieties}, 139--261, C.~I.~M.~E., Varenna, 1969.
\end{thebibliography}
\end{document}